\documentclass[12pt, twoside, fleqn, a4paper]{article}

\usepackage{latexsym}  
\usepackage{amscd}    
\usepackage{theorem}  
\usepackage{pifont}  
\usepackage{mathbbol} 
\usepackage{amsfonts}  
\usepackage{xspace}  
\usepackage{amssymb} 
\usepackage{fancyhdr} 
\usepackage{mathrsfs} 
\usepackage{amsmath} 
\usepackage{graphics} 
\usepackage{graphicx} 
\usepackage{euscript}
\usepackage{psfrag}
\usepackage{empheq} 
\usepackage{mathabx} 
\usepackage[parfill]{parskip}     
\usepackage[colorlinks=true, allcolors=blue]{hyperref}
\usepackage{bigints}

\title{On induced systems and ergodicity for holomorphic correspondences} 
\author{Sathi Trikkadeeri Mana\footnote{Indian Institute of Science Education and Research Thiruvananthapuram (IISER-TVM), Maruthamala P.O., Vithura, Kerala, India. PIN 695 551.\ \ ORCID : 0009-0000-1444-4604, email: \texttt{sathitm23@iisertvm.ac.in}}\ \ and\ \ Shrihari Sridharan\footnote{Indian Institute of Science Education and Research Thiruvananthapuram (IISER-TVM), Maruthamala P.O., Vithura, Kerala, India. PIN 695 551.\ \ ORCID : 0000-0003-2434-4767, email: \texttt{shrihari@iisertvm.ac.in} (Corresponding Author). \\ The second named author thanks the support provided by NBHM through a research grant \\ No. 02011/35/2025/NBHM(R.P)/R\&D II/9832}}

\DeclareFontFamily{OT1}{pzc}{}
\DeclareFontShape{OT1}{pzc}{m}{it}%
              {<-> s * [0.900] pzcmi7t}{}
\DeclareMathAlphabet{\mathpzc}{OT1}{pzc}%
                                 {m}{it}

{\theorembodyfont{\slshape} \newtheorem{theorem}{Theorem}[section]}
{\theorembodyfont{\slshape} \newtheorem{definition}[theorem]{Definition}}
{\theorembodyfont{\slshape} \newtheorem{lemma}[theorem]{Lemma}}
{\theorembodyfont{\slshape} \newtheorem{proposition}[theorem]{Proposition}}
{\theorembodyfont{\slshape} }
{\theorembodyfont{\slshape} \newtheorem{property}[theorem]{Property}}
{\theorembodyfont{\slshape} \newtheorem{corollary}[theorem]{Corollary}} 
{\theorembodyfont{\slshape} } 
{\theorembodyfont{\slshape} \newtheorem{remark}[theorem]{Remark}}
{\theorembodyfont{\slshape} \newtheorem{Example}[theorem]{Example}}

\numberwithin{equation}{section}

\newenvironment{proof}{\paragraph{Proof:}}{\hfill$\blacktriangle$}
\newenvironment{example}{\begin{Example}}{\hfill$\triangle$\end{Example}}

\begin{document}

\maketitle 

\begin{abstract} 
The Poincar\'{e} recurrence theorem for a holomorphic correspondence provides a natural framework for studying the first return time and a dynamical system thus induced, in this setting. After establishing a version of the Kac's theorem on the average of the first return times for a holomorphic correspondence, we introduce the concept of an induced correspondence, akin to the case of dynamics of maps and prove that it inherits the ergodicity of the holomorphic correspondence. Additionally, we characterise the ergodic measures with respect to the considered holomorphic correspondence, establish the density of forward orbits under mild conditions and prove a Rokhlin-Kakutani type lemma. 
\end{abstract}

\begin{tabular}{l l} 
{\bf Keywords} & Holomorphic correspondences\\ 
& Recurrence\\
& Kac's theorem\\
& Induced correspondences \\
& Ergodicity \\
\\
{\bf MSC Subject} & \\ 
{\bf Classifications} & 37F05, 37B20, 37A25 \\ 
\end{tabular} 
\bigskip 

\section{Introduction}
\label{sec:intro}
The study of recurrence in dynamical systems has been a central theme in the development of modern dynamics since the pioneering work of Henri Poincar\'{e}. The celebrated Poincar\'{e} recurrence theorem states that for certain self-maps of a probability space, the orbit of almost every point in a set of positive measure visits that set infinitely often \cite{poinc:1890}. This was formalised in 1890 during his research on the stability of three-body problem in celestial mechanics. Subsequently in 1947, Kac extended the framework of the Poincar\'{e} recurrence theorem by establishing a fundamental quantitative refinement. He proved that for a measure-preserving dynamical system, the expected return time to a measurable set is precisely equal to the reciprocal of the measure of that set. The theorem can be formally stated as follows.  

\begin{theorem} \cite{kac:1947} 
\label{thm:kacfort} 
Let $(X, \mathscr{B}, M, T)$ be an ergodic measure preserving system, meaning $X$ is a compact probability space, $\mathscr{B}$ a Borel $\sigma$-algebra of subsets of $X$, $M$ a probability measure supported on $X$ and $T$ a transformation defined on $X$ that preserves the measure $M$ and is ergodic with respect to the measure. Let $E \subseteq X$ be such that $M(E) > 0$. Define the first return time of a point $x_{0} \in E$ to the set $E$ as 
\begin{eqnarray}
\label{eqn:1}
n_{E} \left( x_{0} \right)\ \ =\ \ \inf \left\{ p\, \in\, \mathbb{Z}_{+}\; :\; T^{p} \left( x_{0} \right) \in E \right\}. 
\end{eqnarray}
Then, the average of the first return times satisfies 
\[ \int_{E} n_{E}\,\mathrm{d}M_{E}\ \ =\ \ \frac{1}{M (E)},\ \ \ \ \text{where}\ \ M_{E} ( E' )\ \ =\ \ \dfrac{M \left( E \cap E' \right)}{M (E)}\ \ \ \text{for any Borel subset}\ E' \subseteq X. \] 
\end{theorem}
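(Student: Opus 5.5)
The proof is the classical tower (Kakutani skyscraper) decomposition of $X$ over the base $E$. Note first that $\int_E n_E\,\mathrm{d}M_E = \frac{1}{M(E)}\int_E n_E\,\mathrm{d}M$. So the claim is equivalent to
\[ \int_E n_E\,\mathrm{d}M \;=\; 1, \]
that is, $\sum_{k\ge 1} k\,M(E_k) = 1$, where $E_k = \{x \in E : n_E(x) = k\}$.

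Before anything else, Poincar\'e recurrence shows that $n_E < \infty$ for $M$-almost every point of $E$. Hence $E = \bigsqcup_{k\ge1} E_k$ up to a null set, and each $E_k$ is measurable, being a finite Boolean combination of the sets $T^{-p}E$.

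Next, for $k\ge1$ and $0\le j<k$ consider the level sets $T^j(E_k)$. Since $T$ need not be invertible, I will work with preimages instead. Define
\[ A_j = \{x\in X : x,Tx,\dots,T^{j-1}x \notin E,\ T^j x\in E\},\qquad j\ge 0, \]
so that $A_0 = E$. These sets are pairwise disjoint. By ergodicity, $\bigcup_{p\ge0}T^{-p}E$ is a set $F$ with $T^{-1}F\subseteq F$ and $M(T^{-1}F)=M(F)$, so it is invariant mod $0$ and therefore has full measure. Thus $X=\bigsqcup_{j\ge0}A_j$ mod $0$, and $1=\sum_{j\ge0}M(A_j)$.

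It remains to show that $M(A_j) = M\bigl(\{x\in E: n_E(x) > j\}\bigr)$ for every $j$. Once this holds,
\[ 1=\sum_{j\ge0} M(n_E>j)=\int_E n_E\,\mathrm{d}M \]
by the layer-cake formula. For the identity, set $B_j = T^{-j}E \setminus \bigcup_{i<j}T^{-i}E$, which is exactly $A_j$. Then $T^{-1}A_{j}$ splits disjointly as $(T^{-1}A_j\cap E)\sqcup A_{j+1}$. Invariance of $M$ gives
\[ M(A_j) = M\bigl(E\cap T^{-1}A_j\bigr) + M(A_{j+1}). \]
Here $E\cap T^{-1}A_j = E_{j+1}$, so $M(A_j)-M(A_{j+1}) = M(E_{j+1})$. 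Summing the telescoping series from $j$ upward gives
\[ M(A_j) = \sum_{k>j}M(E_k) + \lim_{m\to\infty} M(A_m). \]
Because the $A_m$ are disjoint and $\sum_m M(A_m) \le 1$, we have $M(A_m)\to0$, and the identity follows.

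The main obstacle is the bookkeeping in the non-invertible case: the decomposition $T^{-1}A_j = E_{j+1}\sqcup A_{j+1}$ must be checked carefully from the definitions, and ergodicity is needed precisely to guarantee that the $A_j$ exhaust $X$. Without ergodicity the argument only yields $\int_E n_E\,\mathrm{d}M = M\bigl(\bigcup_p T^{-p}E\bigr)$.
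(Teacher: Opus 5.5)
The paper does not prove this statement. It is quoted from Kac's 1947 paper and used as a black box, and only for the invertible shift $\sigma^{\Gamma}$ on $\mathscr{O}^{\Gamma}(\widehat{\mathbb{C}})$ in the proof of Theorem~\ref{thm:kacforc}.

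Your argument is a correct, self-contained proof:
\begin{itemize}
\item the splitting $T^{-1}A_j = E_{j+1}\sqcup A_{j+1}$ checks out from the definitions;
\item the telescoping with $M(A_m)\to 0$ is valid;
\item ergodicity enters exactly once, through $M\bigl(\bigcup_{p\ge 0}T^{-p}E\bigr)=1$. This uses the standard fact that a set with $M(F\,\triangle\,T^{-1}F)=0$ has measure $0$ or $1$, the same fact the paper later cites from Walters.
\end{itemize}

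The classical Kakutani skyscraper over the forward levels $T^{j}(E_k)$, $0\le j<k$, needs injectivity of $T$ to make the levels disjoint. Your first-entrance decomposition by preimages works for non-invertible $T$, so it matches the generality of the statement as written. For the paper's actual application to the homeomorphism $\sigma^{\Gamma}$, either version would do.

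Two minor remarks:
\begin{itemize}
\item The $j=0$ case of your identity, $M(E)=M(A_0)=\sum_{k\ge1}M(E_k)$, already gives Poincar\'e recurrence, so you need not invoke it separately.
\item The sentence about the level sets $T^{j}(E_k)$ is never used and can be deleted.
\end{itemize}
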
 

This naturally leads to defining the induced map $T_{E}(x) = T^{n_{E}(x)}(x)$, for $M$-almost every point $x \in E$. By tracking the system only at return times, this map compresses the dynamics to $E$, for $M$ - almost every point $x \in E$, yielding a full dynamical system supported on $E$. Building upon these, further research has focused on dynamical systems induced by first return times. 

The central object of our work is what is known as a holomorphic correspondence defined on $\widehat{\mathbb{C}}$, which exhibits rich analytical and dynamical properties. The study of iterated holomorphic correspondences originated with the works of Bullett and his collaborators, \cite{bul:1988, BH:2000, BF:2005}, in particular the research articles due to Bullett and Penrose \cite{BP:1994, BP:_1994, BP:2001}, where the authors developed a general theory of Julia sets, alongside `mating' correspondences for quadratic polynomials and the modular group. Thereupon, Dinh and Sibony in \cite{dinh:2005, ds:2005, ds:2006} introduced certain fundamental ergodic tools to the system, by establishing canonical invariant measures and entropy bounds for meromorphic correspondences. Further, Bullett, Lomonaco, Lyubich, Mukherjee, Mahan and Mazor in \cite{LMM:2024, MM:2025, LM:2025, BLLM:2026} developed a unifying framework for mating rational maps with Kleinian and Fuchsian groups via algebraic correspondences. Works of Bharali and Sridharan \cite{bs:2016, bs:2017} regarding the concepts of normality in the family of iterates of a holomorphic correspondence further developed the field. Subsequent works of Londhe concerning recurrence and ergodicity of a holomorphic correspondence, as in \cite{lon:2022, lon:2024} provide a conceptual motivation for our present study. In \cite{lon:2022}, the author establishes an analogue of the Poincar\'{e} recurrence theorem for the system under consideration, thereby providing a crucial fundamental result in this setting. This naturally motivates the present investigation into the return times and the associated induced systems. In \cite{lon:2024}, the author introduced a framework for ergodicity and proved an analogue of Birkhoff's ergodic theorem. Using these foundation results, we extend the analysis to examine certain other interesting aspects of ergodicity.

This manuscript is divided into two parts : The first part of this work deals with the return time function defined for a point, as a consequence of the analogue of Poincar\'{e} recurrence theorem for the system. We prove a version of Kac's theorem, under certain assumptions, and discuss some special cases and interesting examples. Motivated by these developments, we introduce and analyse a class of multivalued maps, namely the induced correspondences. We establish that the induced system inherits ergodicity from the underlying holomorphic correspondence, thereby providing a structural link between the dynamics of the original correspondence and the induced system. The second part of the study concerns the ergodicity of holomorphic correspondences, as defined by Londhe. We give some interesting equivalences and density arguments related to the same. 

This article is organised as follows. In Section \ref{sec:prelims} and Section \ref{sec:orbits}, we develop the machinery required for our study. Beginning with the definition of a holomorphic correspondence, we formulate the associated set-valued dynamical system, introduce the notion of invariant measures and construct the corresponding orbit space. Section \ref{sec:recc} provides a concise overview of the major ideas and results from \cite{lon:2022,lon:2024}, which will be instrumental in our subsequent analysis. The main results of this work are listed in Section \ref{sec:results}. In Section \ref{sec:kac_proof}, we establish a version of Kac's theorem for the concerned system, complemented by several special cases and illustrative examples. Section \ref{sec:induced} is devoted to the introduction of an induced system, based on the first return times of points, one of the central objects of investigation in this work. We examine certain structural and dynamical properties of the induced system and prove that it inherits ergodicity from the underlying holomorphic correspondence (Theorem \ref{thm:erg}). In Section \ref{sec:erg}, we prove the remainder of the main results of this paper that examine certain ergodic aspects of a holomorphic correspondence. We begin by proving a result that characterises ergodicity in Theorem \ref{thm:equierg}, followed by Theorem \ref{thm:dense} which establishes the density of forward orbits under mild assumptions. We conclude with the proof of Theorem \ref{thm:rkforc}, an analogue of the Rokhlin-Kakutani lemma for the system.

\section{Preliminaries}
\label{sec:prelims}

This section is intended to familiarise us with the foundational concepts in the dynamics of holomorphic correspondences. We begin this section with the definition of a holomorphic correspondence, a fundamental object that we shall be interested in, throughout this paper. 

\begin{definition} 
\label{defn:holcorresp} 
A \emph{holomorphic correspondence} $\Gamma \subseteq \widehat{\mathbb{C}} \times \widehat{\mathbb{C}}$ is given by a formal linear combination of the form 
\begin{equation} 
\label{eqn:holo} 
\Gamma\ \ =\ \ \sum_{\alpha\, =\, 1}^{N} m_{\alpha} \Gamma_{\alpha}, 
\end{equation} 
where $m_{\alpha}$’s are positive integers that count the respective multiplicities of the distinct irreducible complex-analytic subvarieties $\Gamma_{\alpha}$'s of dimension $1$ that satisfy the following conditions: For $i = 1, 2$, suppose $\pi_{i}$ denotes the projection onto the $i$-th coordinate, then 
\begin{enumerate} 
\item $\pi_{i} \vert_{\Gamma_{\alpha}}$ is surjective for each $i = 1, 2$ and for every $1 \le \alpha \le N$ and 
\item the sets $\pi_{1} \left( \pi_{2}^{-1} \left( \left\{ z_{0} \right\} \right)\cap \Gamma_{\alpha} \right)$ and $\pi_{2} \left( \pi_{1}^{-1} \left( \left\{ z_{0} \right\} \right)\cap \Gamma_{\alpha} \right)$ are of finite cardinality for any generic point $z_{0} \in \widehat{\mathbb{C}}$ and for every $1 \le \alpha \le N$. 
\end{enumerate} 
\end{definition}

Throughout this paper, there are instances where we use the term ``set" to represent the collection of all images and pre-images of points in $\widehat{\mathbb{C}}$, counted with multiplicities, under $\Gamma$, as we have done in the second condition of Definition \ref{defn:holcorresp}. We denote the set-valued map stemming from the holomorphic correspondence $\Gamma$ by $\digamma$ and call the set $\pi_{2} \left( \pi_{1}^{-1} \left( \left\{ z_{0} \right\} \right)\cap \Gamma \right)$ as the set of images of the point $z_{0}$ under $\Gamma$, denoted by $\digamma (z_{0})$. For a generic point $z_{0}$, the cardinality of the set $\digamma (z_{0})$ is called the \emph{forward degree} of $\Gamma$ and is denoted by $d_{{\rm fwd}}$.

Suppose $\Gamma^{\dagger}$ denotes the adjoint of the holomorphic correspondence $\Gamma$, written in Equation \eqref{eqn:holo} given by 
\begin{equation} 
\label{eqn:adjholocorr} 
\Gamma^{\dagger} \ \ =\ \ \sum_{\alpha\, =\, 1}^{N} m_{\alpha} \Gamma_{\alpha}^{\dagger},\ \ \text{where}\ \ (w, z) \in \Gamma_{\alpha}^{\dagger}\ \text{whenever}\ (z, w) \in \Gamma_{\alpha}.
\end{equation} 
Then, it is clear from Definition \ref{defn:holcorresp} and Equation \eqref{eqn:adjholocorr} that $\Gamma^{\dagger}$ is a holomorphic correspondence, in its own right. Suppose $\digamma^{\dagger}$ denotes the set-valued map stemming from the holomorphic correspondence $\Gamma^{\dagger} $. Then $\digamma^{\dagger} (z_{0}) = \pi_{1} \left( \pi_{2}^{-1} \left( \left\{ z_{0} \right\} \right)\cap \Gamma_{\alpha} \right)$ represents the set of pre-images of $z_{0}$ under $\Gamma$. The cardinality of the set $\digamma^{\dagger} (z_{0})$, for a generic point $z_0$, is called the \emph{topological degree} of the holomorphic correspondence $\Gamma$ and is denoted by $d_{{\rm top}}$.

The set of images and pre-images of a set $E \subseteq \widehat{\mathbb{C}}$ with respect to a holomorphic correspondence $\Gamma$ is given by $\displaystyle{\digamma (E) = \bigcup_{z_{0}\, \in\, E}\digamma(z_{0})}$ and $\displaystyle{\digamma^{\dagger} (E)=\bigcup_{z_{0}\, \in\, E}\digamma^{\dagger}(z_{0})}$, respectively. Note that the sets $\digamma (E)$ and $\digamma^{\dagger} (E)$ are Borel, whenever $E \subseteq \widehat{\mathbb{C}}$ is a Borel set, (see \cite{lon:2024}). The set underlying the representation of $\Gamma$, called \emph{the support of $\Gamma$}, is given by $\displaystyle{\big| \Gamma \big| = \bigcup_{\alpha\, =\, 1}^{N} \Gamma_{\alpha}}$. The ability to compose two correspondences introduces a perspective of dynamics to the study of correspondences. The following definition illustrates a way to compose any two holomorphic correspondences $\Gamma^{1}$ and $\Gamma^{2}$ on $\widehat{\mathbb{C}}$, see \cite{bs:2016, bs:2017}. 

\begin{definition} 
Suppose $\Gamma^{1} = \sum_{1\, \leq\, \alpha\, \leq\, N_{1}}' \Gamma_{\alpha}^{1}$ and $ \Gamma^{2} = \sum_{1\, \leq\, \beta\, \leq\, N_{2}}' \Gamma_{\beta}^{2}$ are two holomorphic correspondences on $\widehat{\mathbb{C}}$, where the primed sums indicate the repetition of the varieties according to its multiplicity. Then, $\big| \Gamma^{2} \circ \Gamma^{1} \big|$ is merely the set obtained by the classical composition of $\left|\Gamma^{2}\right|$ with $\left|\Gamma^{1}\right|$ as relations. If $Y_{s, \alpha \beta},\ s = 1, \cdots, N(\alpha, \beta)$ are the distinct irreducible components of $\left|\Gamma_{\beta}^{2}\right| \circ \left|\Gamma_{\alpha}^{1}\right|$, then let $\eta_{s, \alpha \beta} = \# \left\{ y \in \widehat{\mathbb{C}} : \text{for any} \, \, (x, z) \in Y_{s, \alpha\beta}, \text{we have} \, \, (x, y) \in \Gamma_{\alpha}^{1}, (y, z) \in \Gamma_{\beta}^{2} \right\}$.  
This entails the following definition for the  composition of the two holomorphic correspondences: 
\[ \Gamma^{2} \circ \Gamma^{1}\ \ =\ \ \sum_{1\, \leq\, \beta \, \leq\, N_{2}}\ \sum_{1\, \leq\, \alpha \, \leq\, N_{1}}\ \sum_{1\, \leq\, s\, \leq\, N(\alpha, \beta)}\ \eta_{s, \alpha \beta} Y_{s, \alpha \beta}.\] 
\end{definition}

Suppose we consider $\Gamma$ to be a holomorphic correspondence on $\widehat{\mathbb{C}}$, then the above rule for composition of correspondences yields a natural definition for iteration of $\Gamma$ with itself. For $n \in \mathbb{Z}_{+}$, we denote by $\Gamma^{\circ n}$, the $n$-fold composition of $\Gamma$ with itself, {\it i.e.}, $\Gamma^{\circ n} = \Gamma \circ \Gamma \circ \cdots \circ \Gamma$, which is also a holomorphic correspondence. We denote by $\digamma^{n}$, the set-valued map stemming from the holomorphic correspondence $\Gamma^{\circ n}$. Throughout the paper, we shall use the notations $\Gamma$ and $\digamma$ alternatively, that one can learn from the context whether we refer to the correspondence or the set-valued map stemming from the correspondence. 

Let $\mathscr{M} \left( \widehat{\mathbb{C}} \right)$ denote the set of all Borel probability measures supported on $\widehat{\mathbb{C}}$. Then, for every measure $\mu \in \mathscr{M} \left( \widehat{\mathbb{C}} \right)$, the holomorphic correspondence $\digamma$ induces a pullback measure denoted by $\digamma^{*} \mu \in \mathscr{M} \left( \widehat{\mathbb{C}} \right)$ given by $\displaystyle{\int_{\widehat{\mathbb{C}}} f\, \mathrm{d} \left(\digamma^{*} \mu \right) =\int_{\widehat{\mathbb{C}}} \sum_{w\, \in\, \digamma^{\dagger} (z)} f(w)\, \mathrm{d}\mu\  \text{for any} \, f \in \mathcal{C} \left( \widehat{\mathbb{C}}, \mathbb{R} \right)}$. 

\begin{definition} 
A measure $\mu \in \mathscr{M} \left( \widehat{\mathbb{C}} \right)$ is said to be \emph{$\digamma^{*}$-invariant} if $\mu (E) = \dfrac{1}{d_{{\rm top}}} \left( \digamma^{*} \mu \right) (E)$ for every Borel set $E \subseteq \widehat{\mathbb{C}}$, or equivalently 
\[ \int_{\widehat{\mathbb{C}}} f\, \mathrm{d} \mu\ \ =\ \ \frac{1}{d_{{\rm top}}} \int_{\widehat{\mathbb{C}}} f\, \mathrm{d} \left(\digamma^{*} \mu \right)\ \ \ \ \ \text{for any}\ f \in \mathcal{C} \left( \widehat{\mathbb{C}}, \mathbb{R} \right). \] 
\end{definition} 

The density of $\mathcal{C} \left( \widehat{\mathbb{C}}, \mathbb{R} \right)$ in $\mathscr{L}^{p}(\widehat{\mathbb{C}}, \mu)$ for $1 \le p < \infty$ implies that the above equation remains valid for any $f \in \mathscr{L}^{p}(\widehat{\mathbb{C}}, \mu)$. We now provide an example of a $\digamma^{*}$-invariant measure in $\mathscr{M} \left( \widehat{\mathbb{C}} \right)$, by considering an amalgamation of results from \cite{ds:2006, bs:2016}. 

\begin{example}\cite{ds:2006, bs:2016} 
\label{ex:invmeas} 
Let $\digamma$ be a holomorphic correspondence defined on $\widehat{\mathbb{C}}$. 
\begin{enumerate} 
\item Let $d_{{\rm top}} > d_{{\rm fwd}}$. Then, there exists an exceptional set $\mathcal{E}$ such that the sequence of measures $\displaystyle{\left\{ \frac{1}{\left( d_{{\rm top}} \right)^{n}} \left( \digamma^{n} \right)^{*} \delta_{z_{0}} \right\}_{n\, \in\, \mathbb{Z}_{+}}}$ converges (in the weak*-topology) to some measure, independent of the chosen point $z_{0} \in \widehat{\mathbb{C}} \setminus \mathcal{E}$. 
\item Suppose $d_{{\rm top}} \le d_{{\rm fwd}}$. Assume that $\digamma$ has a strong repeller $\mathcal{R}$ that is disjoint from the set of critical values of $\digamma$. Then there exists an open set $U \left( \digamma, \mathcal{R} \right) \supset \mathcal{R}$ such that the sequence of measures $\displaystyle{\left\{ \frac{1}{d_{top}^{n}} \left( \digamma^{n} \right)^{*} \delta_{z_{0}} \right\}_{n\, \in\, \mathbb{Z}_{+}}}$ converges (in the weak*-topology) to some measure, independent of the chosen point $z_{0} \in U \left( \digamma, \mathcal{R} \right)$. 
\end{enumerate} 

Observe that the limiting measure in both the statements above quantifies the distribution of pre-images of the chosen point $z_{0}$. This measure is called the \emph{Dinh-Sibony measure}, denoted by $\omega_{{\rm DS}}$ and is an example of a $\digamma^{*}$-invariant measure in $\mathscr{M} \left( \widehat{\mathbb{C}} \right)$. Note that $\omega_{{\rm DS}}$ is analogous to the limiting measure due to Brolin for polynomial maps, as one may find in \cite{bro:1965} or the limiting measure due to Lyubich for a rational map restricted on its Julia set, as one may find in \cite{lju:1983}. Suppose the support of the measure $\omega_{{\rm DS}}$ is denoted by $\Omega_{{\rm DS}}$, then for any $n \in \mathbb{Z}_{+}$, we have $(\digamma^{n})^{\dagger} \left( \Omega_{{\rm DS}} \right) \subseteq \Omega_{{\rm DS}}$. Even though $\Omega_{{\rm DS}}$ is not forward invariant with respect to $\digamma$, we have $\digamma^{n} \left( \Omega_{{\rm DS}} \right) \cap \Omega_{{\rm DS}} \ne \emptyset$, for every $n \in \mathbb{Z}_{+}$ (see \cite{lon:2022}). 
\end{example} 

\section{The space of bi-infinitely long paths} 
\label{sec:orbits}

In this section, we present another perspective of looking at the dynamics of a holomorphic correspondence $\Gamma$ by looking at the paths of points, which apart from keeping track of the orbit of points in $\widehat{\mathbb{C}}$ also takes into account the varieties through which the orbit evolves in either direction. Towards that end, we define the collection of all orbits of the point $z_{0} \in \widehat{\mathbb{C}}$ with respect to $\Gamma$ as
\begin{eqnarray} 
\label{eqn:di_gamma}
\mathscr{O}^{\Gamma} \left( z_{0} \right) & = & \Bigg\{ \left( \cdots, z_{k_{2}}^{(-2)}, z_{k_{1}}^{(-1)}, \underline{z_{0}}, z_{j_{1}}^{(1)}, z_{j_{2}}^{(2)}, \cdots;\; \cdots, \beta_{2}, \beta_{1} \vert \alpha_{1}, \alpha_{2}, \cdots \right)\ :\ \left( z_{j_{i - 1}}^{(i - 1)}, z_{j_{i}}^{(i)} \right) \in \Gamma_{\alpha_{i}} ,\nonumber \\ 
& & \hspace{+1cm} \left( z_{k_{i}}^{(-i)}, z_{k_{i - 1}}^{(-(i - 1))}\right) \in \Gamma_{\beta_{i}}\ \text{where}\ z_{j_{0}}^{(0)}=z_{k_{0}}^{(0)} = z_{0}, \,  \alpha_{i}, \beta_{i} \in \left\{ 1, 2, \cdots, N \right\} \ \ \text{and} \nonumber \\ 
 & & \hspace{+1cm}  1 \le j_{i} \le m_{\alpha_{i}} \lambda_{\alpha_{i}} \left( z_{j_{i - 1}}^{(i - 1)} \right) , \ 1 \le k_{i} \le m_{\beta_{i}} \delta_{\beta_{i}} \left( z_{k_{i - 1}}^{(-(i - 1))} \right)\ \text{for}\ i \in \mathbb{Z}_{+} \Bigg\} 
\end{eqnarray} 
where $\lambda_{\alpha_{i}} \left( z^{(i - 1)}_{j_{i - 1}} \right)$ and $\delta_{\beta_{i}}\left( z^{(-(i - 1))}_{k_{i - 1}} \right)$ denotes the cardinality of the sets $\left\{z^{(i)}_{j_{i}}: \left( z^{(i - 1)}_{j_{i - 1}}, z^{(i)}_{j_{i}} \right) \in \Gamma_{\alpha_{i}} \right\}$ and  $\left\{z^{(-i)}_{k_{i}}: \left( z^{(-i)}_{k_{i}}, z^{(-(i - 1))}_{k_{i - 1}} \right) \in \Gamma_{\beta_{i}} \right\}$, respectively. The underline below the point $z_{0}$ is used to refer to the zeroth position in this part of the bi-infinite lettered word, while the vertical line `$\vert$' between $\beta_{1}$ and $\alpha_{1}$ refers to the zeroth position in this part of the bi-infinite lettered word. It is a simple observation that for any generic point $z_{0} \in \widehat{\mathbb{C}}$, we have $\displaystyle{\sum\limits_{\alpha\, =\, 1}^{N} m_{\alpha} \lambda_{\alpha} \left( z_{0} \right) = d_{{\rm fwd}}}$ and $\displaystyle{\sum\limits_{\beta\, =\, 1}^{N} m_{\beta} \delta_{\beta} \left( z_{0} \right) = d_{{\rm top}}}$. 

For any $E \subseteq \widehat{\mathbb{C}}$, we define $\displaystyle{\mathscr{O}^{\Gamma} \left(E \right)}$ to be $\displaystyle{\bigcup_{z_{0}\, \in\, E} \mathscr{O}^{\Gamma} \left( z_{0} \right)}$. In particular, $\displaystyle{\mathscr{O}^{\Gamma} \left( \widehat{\mathbb{C}} \right) = \bigcup_{z_{0}\, \in\, \widehat{\mathbb{C}}} \mathscr{O}^{\Gamma} \left( z_{0} \right)}$. Any point in $\mathscr{O}^{\Gamma} \left( \widehat{\mathbb{C}} \right)$ is denoted by $\mathfrak{X} \left( z_{0}; \boldsymbol{\gamma} \right)_{\boldsymbol{l}}$ where $z_{0} \in \widehat{\mathbb{C}},\ \boldsymbol{\gamma} = \left( \cdots \beta_{2}, \beta_{1} \vert \alpha_{1}, \alpha_{2}, \cdots  \right) \in \left\{ 1, 2, \cdots, N \right\}^{\mathbb{Z}}$ and $\boldsymbol{l} = \left( \cdots k_{2}, k_{1}, 0, j_{1}, j_{2}, \cdots \right)$, as described in Equation \eqref{eqn:di_gamma}. For any $r\, \in\, \mathbb{Z}$, let $\Pi_{r} : \mathscr{O}^{\Gamma} \left( \widehat{\mathbb{C}} \right) \longrightarrow \widehat{\mathbb{C}}$ denote the family of projections given by
\[ \Pi_{r} \left( \mathfrak{X} \left( z_{0}; \boldsymbol{\gamma} \right)_{\boldsymbol{l}} \right)\ \ =\ \ \begin{cases} z^{(r)}_{j_{r}} & \text{if}\ r \ge 0, \\ \vspace{-10pt} \\ z^{(r)}_{k_{-r}} & \text{if}\ r < 0. \end{cases} \]

Consider the shift map $\sigma^{\Gamma} : \mathscr{O}^{\Gamma} \left( \widehat{\mathbb{C}} \right) \longrightarrow \mathscr{O}^{\Gamma} \left( \widehat{\mathbb{C}} \right)$ given by 
\begin{eqnarray} 
\label{eqn:sigmadigamma} 
\sigma^{\Gamma} \left( \mathfrak{X} \left( z_{0}; \boldsymbol{\gamma} \right)_{\boldsymbol{l}} \right) & = & \sigma^{\Gamma} \left( \left( \cdots, z_{k_{2}}^{(-2)}, z_{k_{1}}^{(-1)}, \underline{z_{0}}, z_{j_{1}}^{(1)}, z_{j_{2}}^{(2)}, \cdots;\; \cdots, \beta_{2}, \beta_{1} \vert \alpha_{1}, \alpha_{2}, \cdots \right) \right) \nonumber \\ 
& = & \left( \cdots, z_{k_{1}}^{(-1)}, z_{0}, \underline{z_{j_{1}}^{(1)}}, z_{j_{2}}^{(2)}, z_{j_{3}}^{(3)}, \cdots;\; \cdots, \beta_{1}, \alpha_{1} \vert \alpha_{2}, \alpha_{3}, \cdots \right) \nonumber \\ 
& = & \mathfrak{X} \left( z_{j_{1}}^{(1)}; \sigma \boldsymbol{\gamma} \right)_{\sigma \boldsymbol{l}}, 
\end{eqnarray} 
whereby prefixing $\sigma$ to the bi-infinite lettered word $\boldsymbol{\gamma}$ or $\boldsymbol{l}$, we mean the bi-infinite lettered word that we obtain from $\boldsymbol{\gamma}$ or $\boldsymbol{l}$, as appropriate, by shifting it one place to the left. Also, we use the notation $\sigma^{m} \boldsymbol{\gamma}$ and $\sigma^{m} \boldsymbol{l}$ to denote $\sigma \left( \sigma^{m - 1} \boldsymbol{\gamma} \right)$ and $\sigma \left( \sigma^{m - 1} \boldsymbol{l} \right)$ respectively, for any $m \in \mathbb{Z}_{+}$, so that 
\[ \left( \sigma^{\Gamma} \right)^{m} \left( \mathfrak{X} \left( z_{0}; \boldsymbol{\gamma} \right)_{\boldsymbol{l}} \right)\ \ =\ \ \mathfrak{X} \left( z_{j_{m}}^{(m)}; \sigma^{m} \boldsymbol{\gamma} \right)_{\sigma^{m} \boldsymbol{l}}. \] 

Observe that the transformation $\sigma^{\Gamma}$ is invertible on the space $\mathscr{O}^{\Gamma} \left( \widehat{\mathbb{C}} \right)$ and the map $\left( \sigma^{\Gamma} \right)^{-1}$ is given by the right shift map defined analogously so that, for any $m \in \mathbb{Z}_{+}$, we have 
\[ \left( \sigma^{\Gamma} \right)^{-m} \left( \mathfrak{X} \left( z_{0}; \boldsymbol{\gamma} \right)_{\boldsymbol{l}} \right)\ \ =\ \ \mathfrak{X} \left( z_{k_{m}}^{(-m)}; \sigma^{-m} \boldsymbol{\gamma} \right)_{\sigma^{-m} \boldsymbol{l}}. \] 

One can observe that $\mathscr{O}^{\Gamma} \left( \widehat{\mathbb{C}} \right)$ is a metric space when equipped with the metric 
\begin{eqnarray*} 
& & d \left( \left( \mathfrak{X} \left( z_{0}; \boldsymbol{\gamma} \right)_{\boldsymbol{l}} \right),\; \left( \mathfrak{X} \left( w_{0}; \boldsymbol{\gamma'} \right)_{\boldsymbol{l'}} \right) \right) \\ 
& = & \max \Bigg\{ \sup_{n\, \in\, \mathbb{Z}} \left\{ \frac{1}{2^{|n|}} \rho \left( \Pi_{n} \left( \mathfrak{X} \left( z_{0}; \boldsymbol{\gamma} \right)_{\boldsymbol{l}} \right),\, \Pi_{n} \left( \mathfrak{X} \left( w_{0}; \boldsymbol{\gamma'} \right)_{\boldsymbol{l'}} \right) \right) \right\}, \\ 
& & \hspace{+4cm} \sup_{n\, \in\, \mathbb{Z_{+}}} \left\{ \frac{1}{2^{n}} \max \left\{ \left(1 - \delta_{\left( \alpha_{n},\, \alpha_{n}' \right)} \right), \left( 1 - \delta_{\left( \beta_{n},\, \beta_{n}'\right)}\right) \right\} \right\} \Bigg\}, 
\end{eqnarray*}
where $\boldsymbol{\gamma} = \left( \cdots \beta_{2}, \beta_{1} \vert \alpha_{1}, \alpha_{2}, \cdots  \right),\ \boldsymbol{\gamma'} = \left( \cdots \beta_{2}', \beta_{1}' \vert \alpha_{1}', \alpha_{2}', \cdots  \right),\ \rho$ is the spherical metric defined on $\widehat{\mathbb{C}}$ and $\delta_{\left( p, q \right)}$ is the Kronecker delta function. In the topology induced by the metric, readers may note that $\mathscr{O}^{\Gamma} \left( \widehat{\mathbb{C}} \right)$ is a compact metric space where $\sigma^{\Gamma}$ acts as a homeomorphism. 

We consider a variation of the following setting from \cite{sub:2026}. Let $\mathscr{M}_{\sigma^{\Gamma}} \left( \mathscr{O}^{\Gamma} \left( \widehat{\mathbb{C}} \right) \right)$ denote the collection of all Borel probability measures supported on $\mathscr{O}^{\Gamma} \left( \widehat{\mathbb{C}} \right)$ that remain $\sigma^{\Gamma}$-invariant. For any measure $\mathcal{M} \in \mathscr{M}_{\sigma^{\Gamma}} \left( \mathscr{O}^{\Gamma} \left( \widehat{\mathbb{C}} \right) \right)$, let $\left( \Pi_{r} \right)_{*} \mathcal{M}$ denote the push-forward measure, under the projection $\Pi_{r}$ for any $r \in \mathbb{Z}$ given by, 
\[ \left( \left( \Pi_{r} \right)_{*} \mathcal{M} \right) (B)\ \ =\ \ \mathcal{M} \left( \Pi_{r}^{-1} B \right),\ \ \ \text{for any Borel set}\ B \subset \widehat{\mathbb{C}}. \] 
This entails that for any $r \in \mathbb{Z}$ and for an arbitrary choice of a function $f \in \mathcal{C} \left( \widehat{\mathbb{C}}, \mathbb{R} \right)$, we have $\displaystyle{\int_{\widehat{\mathbb{C}}} f \mathrm{d}\left( \Pi_{r} \right)_{*} \mathcal{M} = \int_{\widehat{\mathbb{C}}} f \mathrm{d}\left( \Pi_{r + 1} \right)_{*} \mathcal{M}}$. Thus, $\left( \Pi_{r} \right)_{*} \mathcal{M} = \left( \Pi_{r + 1} \right)_{*} \mathcal{M}$ on $\widehat{\mathbb{C}}$ and hence, it is sufficient to work with the measure $\left( \Pi_{0} \right)_{*} \mathcal{M}$. However, note that there is a possibility that $\left( \Pi_{0} \right)_{*} \mathcal{M}_{1} \equiv \left( \Pi_{0} \right)_{*} \mathcal{M}_{2}$ on $\widehat{\mathbb{C}}$, even when $\mathcal{M}_{1} \ne \mathcal{M}_{2}$ on $\mathscr{O}^{\Gamma} \left( \widehat{\mathbb{C}} \right)$. Since $\mathscr{O}^{\Gamma} \left( \widehat{\mathbb{C}} \right)$ is a compact metric space and $\sigma^{\Gamma}$ is a continuous map there, we know from an elementary result in \cite{PolYuri:1998}, that $\mathscr{M}_{\sigma^{\Gamma}} \left( \mathscr{O}^{\Gamma} \left( \widehat{\mathbb{C}} \right) \right)$ contains at least one $\sigma^{\Gamma}$-ergodic measure, say $\mathcal{M}$ as an extreme point. We shall be interested in the push-forwards of such $\sigma^{\Gamma}$-ergodic measures from $\mathscr{M}_{\sigma^{\Gamma}} \left( \mathscr{O}^{\Gamma} \left( \widehat{\mathbb{C}} \right) \right)$ given by 
\begin{equation} 
\label{eqn:ergviasig} 
\mathscr{E}^{\Gamma}\ \ =\ \ \left\{ \mu \in \mathscr{M}(\widehat{\mathbb{C}})\; :\; \mu = \left( \Pi_{0} \right)_{*} \mathcal{M}\ \text{for some}\ \sigma^{\Gamma}\text{-ergodic}\ \mathcal{M} \in \mathscr{M}_{\sigma^{\Gamma}} \left( \mathscr{O}^{\Gamma} \left( \widehat{\mathbb{C}} \right) \right) \right\}. 
\end{equation} 

\section{Recurrence and ergodicity}
\label{sec:recc}

We begin by reviewing the standard notions and fundamental theorems concerning recurrence and ergodicity as established in \cite{lon:2022, lon:2024}, thus providing the necessary background for our study. 

\begin{definition} 
\label{defn:recc}
Let $\digamma$ be a holomorphic correspondence defined on $\widehat{\mathbb{C}}$. Suppose $E \subseteq \widehat{\mathbb{C}}$. A point $z_{0} \in E$ is said to be \emph{forward recurrent} on $E$ with respect to $\digamma$, if there exists $p \in \mathbb{Z}_{+}$ such that $\digamma^{p} \left( z_{0} \right) \cap E \ne \emptyset$. The minimum value of such $p \in \mathbb{Z}_{+}$ is defined to be the \emph{first forward return time} of $z_{0}$ in $E$, denoted by $\mathfrak{n}_{E} \left( z_{0} \right)$.  
\end{definition} 

\begin{remark} 
\label{defn:returntymback}
Observe that for any point $z_{0} \in E$, the first backward return time of $z_{0}$ in $E$ with respect to the holomorphic correspondence $\digamma$ denoted by $\mathfrak{n}^{\dagger}_{E} \left( z_{0} \right)$ is nothing but the first forward return time of $z_{0}$ in $E$ with respect to the holomorphic correspondence $\digamma^{\dagger}$.
\end{remark} 

Having defined the phenomenon of recurrence we now state the analogue of the Poincar\'{e} recurrence theorem for holomorphic correspondences. 

\begin{theorem}\cite{lon:2022} 
\label{thm:poinc}
Let $\digamma$ be a holomorphic correspondence defined on $\widehat{\mathbb{C}}$ and $\mu \in \mathscr{M} \left( \widehat{\mathbb{C}} \right)$ be a $\digamma^{*}$-invariant measure that does not put any mass on polar sets. Let $E \subseteq \widehat{\mathbb{C}}$ be such that $\mu (E) > 0$. Then, $\mu$-almost every point $z_{0} \in E$ is forward recurrent on $E$. 
\end{theorem}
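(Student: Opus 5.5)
The plan is to imitate the classical proof of the Poincaré recurrence theorem, replacing preimages under a map by the pullback operator $\digamma^{*}$.

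\emph{Step 1: the bad set.} Consider
\[ B\ \ =\ \ \left\{ z_{0} \in E\; :\; \digamma^{p} (z_{0}) \cap E = \emptyset\ \text{for every}\ p \in \mathbb{Z}_{+} \right\}, \]
the set of points of $E$ that never come back. Writing $B = E \setminus \bigcup_{p \ge 1} (\digamma^{p})^{\dagger} (E)$ and using the fact (from \cite{lon:2024}) that images and preimages of Borel sets under $\digamma$ are Borel, we see that $B$ is Borel. The theorem reduces to showing $\mu(B) = 0$.

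\emph{Step 2: the sets $B_{n}$.} For $n \ge 0$ put $B_{n} = (\digamma^{n})^{\dagger}(B)$, the points having some $n$-th image in $B$. We claim these are pairwise disjoint. Suppose $z \in B_{m} \cap B_{n}$ with $m < n$. There are two paths from $z$, one reaching $b \in B$ at time $m$ and another reaching $b' \in B$ at time $n$. The first path cannot be used to reach $b'$ from $b$, because the two paths are different branches. So disjointness fails for correspondences: a point has many forward branches.

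\emph{Step 3: lift to the path space.} This failure is the main obstacle, and the remedy is to work on $\mathscr{O}^{\Gamma}(\widehat{\mathbb{C}})$ with the homeomorphism $\sigma^{\Gamma}$. The first task is to build a $\sigma^{\Gamma}$-invariant probability $\mathcal{M}$ with $(\Pi_{0})_{*}\mathcal{M} = \mu$, exploiting $\digamma^{*}$-invariance. Conditionally on $z_{0}$, the backward coordinates are chosen uniformly among the $d_{\rm top}$ preimages, counted with multiplicity, at each step. This defines consistent finite-dimensional marginals on the backward half-paths, and invariance of $\mu$ is exactly what makes them shift-consistent. Extending by Kolmogorov gives a measure on backward half-paths that is invariant under the backward shift. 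Its natural extension then produces a bi-infinite invariant measure $\mathcal{M}$ on $\mathscr{O}^{\Gamma}$ that projects to $\mu$.

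\emph{Step 4: classical recurrence upstairs.} Apply classical Poincaré recurrence to $(\sigma^{\Gamma})^{-1}$ on the set $\Pi_{0}^{-1}(E)$. For $\mathcal{M}$-a.e. path through $E$, some backward coordinate lies in $E$. Reversing, there is a point $w \in E$ and an $n$ with $z_{0} \in \digamma^{n}(w)$. This gives backward recurrence of paths, and pushing forward shows $\mu$-a.e. point of $E$ has an ancestor in $E$. That is the wrong direction for the theorem.

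\emph{Step 5: obtaining forward recurrence.} To get the direction the theorem asks for, I would argue via the bad set $B$ directly. Invariance gives
\[ \mu(A)\ \ =\ \ \frac{1}{d_{\rm top}} \int \#\left( \digamma^{\dagger}(z) \cap A \right) \mathrm{d}\mu(z), \]
so $\mu(A) > 0$ forces $\mu(\digamma(A)) > 0$. Apply the lifted backward recurrence from Step 4 to $B$ itself. It yields $b' \in B$ and $b \in B$ with $b \in \digamma^{n}(b')$ for some $n \ge 1$, which contradicts the definition of $B$ whenever $\mu(B) > 0$. Hence $\mu(B) = 0$.

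\emph{Where the hypotheses enter.} The delicate point is Step 3, in particular that the Kolmogorov construction is well defined. Multiplicities and critical values cause trouble, and the hypothesis that $\mu$ charges no polar set is used here. The critical-value set of $\digamma$ is finite, hence polar, so $\mu$-a.e. point has exactly $d_{\rm top}$ preimages counted correctly and lies off the bad locus. This makes the transition kernel measurable and ensures that invariance holds in the integrated form used above.
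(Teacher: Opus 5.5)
Your argument is correct, but it takes a different route from the source. The paper does not prove Theorem \ref{thm:poinc}; it quotes it from \cite{lon:2022}. Its own Lemma \ref{lem:londhe}(1) points to the standard short argument, which never leaves $\widehat{\mathbb{C}}$.

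\textbf{The direct argument.} Invariance gives $\mu(A)\le\mu(\digamma^{\dagger}(A))$, because every one of the $d_{{\rm top}}$ preimages of a point of $A$ lies in $\digamma^{\dagger}(A)$. Put $C_{N}=\bigcup_{n\ge N}(\digamma^{n})^{\dagger}(E)$, with $(\digamma^{0})^{\dagger}(E)=E$. Then $C_{N+1}=\digamma^{\dagger}(C_{N})\subseteq C_{N}$, so $\mu(C_{N}\setminus C_{N+1})=0$. Your bad set satisfies $B=E\setminus C_{1}\subseteq C_{0}\setminus C_{1}$, so it is null. In fact $\mu(E\setminus C_{N})=0$ for every $N$, which gives infinitely many returns.

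\textbf{Your route.} You instead realise $\mu$ as $(\Pi_{0})_{*}\mathcal{M}$ for a $\sigma^{\Gamma}$-invariant $\mathcal{M}$. This $\mathcal{M}$ comes from the stationary Markov chain with kernel $\frac{1}{d_{{\rm top}}}\digamma^{*}\delta_{z}$ (the dual of $\mathcal{T}_{\digamma}$) and its natural extension; you then apply classical Poincar\'{e} recurrence. This costs Kolmogorov/Ionescu-Tulcea plus a natural extension, but it buys more:
\begin{itemize}
\item for $\mathcal{M}$-almost every orbit path through $E$, a single branch returns to $E$ infinitely often, in both time directions;
\item every $\digamma^{*}$-invariant $\mu$ lifts to $\mathscr{M}_{\sigma^{\Gamma}}(\mathscr{O}^{\Gamma}(\widehat{\mathbb{C}}))$. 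The lift need not be ergodic, so this does not place $\mu$ in $\mathscr{E}^{\Gamma}$.
\end{itemize}

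\textbf{Streamlining.}
\begin{itemize}
\item Since $\sigma^{\Gamma}$ is invertible and preserves $\mathcal{M}$, you can apply Poincar\'{e} recurrence to $\sigma^{\Gamma}$ itself. A returning path has $\Pi_{n}\in\digamma^{n}(\Pi_{0})\cap E$, while every path in $\Pi_{0}^{-1}(B)$ is non-returning. Hence $\mu(B)=\mathcal{M}(\Pi_{0}^{-1}(B))=0$ at once.
\item Your Step 5 trick (backward recurrence inside $B$ produces $b'\in B$ with $\digamma^{n}(b')\cap B\neq\emptyset$) is valid but unnecessary.
\item Step 2 and the unused remark that $\mu(\digamma(A))>0$ should be dropped.
\item The non-polar hypothesis is not what makes Step 3 work. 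Counted with multiplicity, $\frac{1}{d_{{\rm top}}}\digamma^{*}\delta_{z}$ is a probability measure for every $z$ and depends weak-$*$ continuously on $z$. The hypothesis (in fact only the absence of atoms) matters solely if you insist on the labelled indices of $\mathscr{O}^{\Gamma}$ at the finitely many critical values. In that case you should also note that stationarity keeps every coordinate $\Pi_{-n}$ off that finite set almost surely, not just $\Pi_{0}$.
\end{itemize}
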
 

The concept of set invariance plays a vital role in the dynamical systems of maps and in determining whether the system is ergodic. However, since we are dealing with correspondences, the best we can ask for is something called almost invariance, which in turn, helps us in defining ergodicity. 

\begin{definition} 
\label{defn:Gammaai} 
Let $\digamma$ be a holomorphic correspondence defined on $\widehat{\mathbb{C}}$ and $\mu \in \mathscr{M} \left( \widehat{\mathbb{C}} \right)$ be a $\digamma^{*}$-invariant measure. A Borel set $E \subseteq \widehat{\mathbb{C}}$ is said to be \emph{almost invariant with respect to $\left( \digamma, \mu \right)$} if there exists a Borel set, say $E' \subseteq E$ such that $\mu (E') = \mu (E)$ and $\digamma^{\dagger} (E') \subseteq E$. 
\end{definition} 

Note that the support of the Dinh-Sibony measure denoted by $\Omega_{{\rm DS}}$, as defined in Example \ref{ex:invmeas}, is a trivial example of an almost invariant set with respect to $\left( \digamma, \omega_{{\rm DS}} \right)$. 

\begin{remark} 
\label{rmk:FGammak} 
Let $\digamma$ be a holomorphic correspondence defined on $\widehat{\mathbb{C}}$ and $\mu \in \mathscr{M} \left( \widehat{\mathbb{C}} \right)$ be a $\digamma^{*}$-invariant measure. Suppose $E$ is a Borel set that is almost invariant with respect to $\left( \digamma, \mu \right)$. Then, from \cite{lon:2024} we have,
\begin{enumerate} 
\item $E$ is almost invariant with respect to $\left( \digamma^{k}, \mu \right)$ for every $k \in \mathbb{Z}_{+}$. 
\item $E^{{\rm c}} = \widehat{\mathbb{C}} \setminus E$ is almost invariant with respect to $\left( \digamma, \mu \right)$. 
\end{enumerate} 
\end{remark} 

Using this notion of almost invariance of a set with respect to $\left( \digamma, \mu \right)$, Londhe defines the concept of ergodicity of a measure with respect to the holomorphic correspondence as such. 

\begin{definition} 
\label{defn:Gammaerg}
Let $\digamma$ be a holomorphic correspondence defined on $\widehat{\mathbb{C}}$. A $\digamma^{*}$-invariant measure $\mu \in \mathscr{M} \left( \widehat{\mathbb{C}} \right)$  is said to be \emph{ergodic} with respect to $\digamma$  if for every almost invariant Borel set $E \subseteq \widehat{\mathbb{C}}$ with respect to $(\digamma, \mu)$, we have $\mu (E) = 0$ or $1$. 
\end{definition} 

\section{Main results}
\label{sec:results}

In this section, we state the main results of this manuscript. Our first result concerns a quantification of the average of first return times of points in $E$.

\begin{theorem} 
\label{thm:kacforc} 
Let $\digamma$ be a holomorphic correspondence defined on $\widehat{\mathbb{C}}$. Suppose $\mu \in \mathscr{E}^{\Gamma}$ is a $\digamma^{*}$- invariant measure which does not put any mass on polar sets. Let $E \subseteq \widehat{\mathbb{C}}$ be such that $\mu (E) > 0$. Then, the average of the first return times satisfies 
\[ \int_{E} \mathfrak{n}_{E} (z)\, \mathrm{d}\mu_{E} (z)\ \ \le\ \ \frac{1}{\mu (E)},\ \ \ \ \text{where}\ \ \mu_{E} ( E' ) = \dfrac{\mu \left( E \cap E' \right)}{\mu (E)}\ \text{for any Borel subset}\  E' \subseteq \widehat{\mathbb{C}}.\]  
\end{theorem}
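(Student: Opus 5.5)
The plan is to lift the problem to the space of bi-infinite paths and apply the classical Kac theorem (Theorem \ref{thm:kacfort}) to the shift $\sigma^{\Gamma}$. Since $\mu \in \mathscr{E}^{\Gamma}$, we fix a $\sigma^{\Gamma}$-ergodic measure $\mathcal{M} \in \mathscr{M}_{\sigma^{\Gamma}} \left( \mathscr{O}^{\Gamma} \left( \widehat{\mathbb{C}} \right) \right)$ with $(\Pi_{0})_{*}\mathcal{M} = \mu$. We then set
\[ \widetilde{E} \ \ =\ \ \Pi_{0}^{-1}(E)\ \ \subseteq\ \ \mathscr{O}^{\Gamma} \left( \widehat{\mathbb{C}} \right), \]
which is Borel with $\mathcal{M}(\widetilde{E}) = \mu(E) > 0$. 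Thus $(\mathscr{O}^{\Gamma}(\widehat{\mathbb{C}}), \mathscr{B}, \mathcal{M}, \sigma^{\Gamma})$ is an ergodic measure-preserving system on a compact metric space. Theorem \ref{thm:kacfort} then gives
\[ \int_{\widetilde{E}} n_{\widetilde{E}}\, \mathrm{d}\mathcal{M}_{\widetilde{E}} \ =\ \frac{1}{\mathcal{M}(\widetilde{E})} \ =\ \frac{1}{\mu(E)}, \]
where $n_{\widetilde{E}}$ is the first return time of $\sigma^{\Gamma}$ to $\widetilde{E}$.

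The key pointwise comparison is $\mathfrak{n}_{E}(\Pi_{0}(\mathfrak{X})) \le n_{\widetilde{E}}(\mathfrak{X})$ for every $\mathfrak{X} \in \widetilde{E}$ at which $n_{\widetilde{E}}$ is finite. To see this, suppose $p = n_{\widetilde{E}}(\mathfrak{X})$. Then $\Pi_{p}(\mathfrak{X}) = \Pi_{0}((\sigma^{\Gamma})^{p}\mathfrak{X}) \in E$. By the construction of paths in Equation \eqref{eqn:di_gamma}, $\Pi_{p}(\mathfrak{X}) \in \digamma^{p}(\Pi_{0}(\mathfrak{X}))$, since consecutive coordinates lie in some $\Gamma_{\alpha_i}$ and compositions of relations give $|\Gamma^{\circ p}|$. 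Hence $\digamma^{p}(z_0) \cap E \neq \emptyset$ with $z_0 = \Pi_0(\mathfrak{X})$, and so $\mathfrak{n}_{E}(z_0) \le p$.

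Integrating this inequality over $\widetilde{E}$ with respect to $\mathcal{M}_{\widetilde{E}}$ and using the change of variables $(\Pi_{0})_{*}\mathcal{M}_{\widetilde{E}} = \mu_{E}$ gives
\[ \int_{E} \mathfrak{n}_{E}\, \mathrm{d}\mu_{E} \ =\ \int_{\widetilde{E}} \mathfrak{n}_{E}\circ \Pi_{0}\, \mathrm{d}\mathcal{M}_{\widetilde{E}} \ \le\ \int_{\widetilde{E}} n_{\widetilde{E}}\, \mathrm{d}\mathcal{M}_{\widetilde{E}} \ =\ \frac{1}{\mu(E)}. \]
The pushforward identity holds because $\Pi_0^{-1}(E\cap E') = \widetilde{E} \cap \Pi_0^{-1}(E')$. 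The inequality is genuinely one-sided: a path may wander outside $E$ even when some other branch of $\digamma^{p}$ returns earlier, so equality should not be expected in general.

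The main obstacle is measure-theoretic bookkeeping. We must check that $\mathfrak{n}_{E}$ is Borel measurable and finite $\mu$-a.e. on $E$; finiteness is guaranteed by Theorem \ref{thm:poinc}, since $\mu$ charges no polar sets. For measurability, I would write $\{\mathfrak{n}_E \le p\} = E \cap \bigcup_{q \le p} (\digamma^{q})^{\dagger}(E)$, which is Borel because images and preimages of Borel sets under correspondences are Borel, as cited from \cite{lon:2024}. We must also check that $n_{\widetilde{E}}$ is finite $\mathcal{M}$-a.e. on $\widetilde{E}$, which is classical Poincaré recurrence for $\sigma^{\Gamma}$. Finally, we need that Kac's theorem applies on the compact metric space $\mathscr{O}^{\Gamma}(\widehat{\mathbb{C}})$ with its Borel $\sigma$-algebra; this is exactly the setting of Theorem \ref{thm:kacfort}.
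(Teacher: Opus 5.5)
Your proposal is correct and follows the same route as the paper. You lift to $\mathscr{O}^{\Gamma}(\widehat{\mathbb{C}})$ via a $\sigma^{\Gamma}$-ergodic $\mathcal{M}$ with $(\Pi_0)_*\mathcal{M}=\mu$, compare $\mathfrak{n}_E\circ\Pi_0 \le n_{\Pi_0^{-1}E}$ pointwise, integrate, and apply the classical Kac theorem to $\sigma^{\Gamma}$; you also justify the pointwise comparison via $\Pi_p(\mathfrak{X})\in\digamma^p(\Pi_0(\mathfrak{X}))$ and the measurability of $\mathfrak{n}_E$, both of which the paper leaves implicit (and a.e.\ finiteness of $\mathfrak{n}_E$ already follows from that comparison plus recurrence for $\sigma^{\Gamma}$, so Theorem \ref{thm:poinc} is not really needed).
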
 

The statements of Theorem \ref{thm:poinc} and Theorem \ref{thm:kacforc} pave the way for us to define another set-valued map, which one may call the induced correspondence. 

\begin{definition} 
\label{defn:indcorr} 
Let $\digamma$ be a holomorphic correspondence defined on $\widehat{\mathbb{C}}$ and $\mu \in \mathscr{M} \left( \widehat{\mathbb{C}} \right)$ be a $\digamma^{*}$-invariant measure. Let $E \subseteq \widehat{\mathbb{C}}$ be such that $\mu (E) > 0$. Then, for $\mu$-almost every $z \in E$, define a set-valued map $\digamma_{\hspace{-4pt} E}$ induced from $\digamma$ associated to $E$ as 
\begin{equation} 
\label{eqn:indcorr} 
\digamma_{\hspace{-4pt} E} (z)\ \ =\ \ \digamma^{\mathfrak{n}_{E} (z)} (z) \cap E, 
\end{equation} 
where $\mathfrak{n}_{E} (z)$ is the minimal positive integer, assured by Theorem \ref{thm:poinc}, for $\mu$-almost every $z \in E$. This set-valued map $\digamma_{\hspace{-4pt} E} : E \longrightarrow E$ is called \emph{an induced correspondence}.
\end{definition} 

In the dynamics of maps, the property of ergodicity is inherited by the induced system, whenever the underlying measure preserving transformation $T$ is ergodic. It is only natural to expect that the induced correspondence $\digamma_{\hspace{-4pt} E}$ is also ergodic, whenever the holomorphic correspondence $\digamma$ is ergodic. The following theorem precisely says this. 

\begin{theorem} 
\label{thm:erg}
Suppose $\mu$ is ergodic with respect to the holomorphic correspondence $\digamma$. Let $E \subseteq \widehat{\mathbb{C}}$ be such that $\mu (E) > 0$. Then, $\mu_{E}$ is ergodic with respect to the induced correspondence $\digamma_{\hspace{-4pt} E}$ as defined in Equation \eqref{eqn:indcorr}, meaning for every almost invariant Borel set $E' \subseteq E$ with respect to $\left( \digamma_{\hspace{-4pt} E}, \mu_{E} \right)$, we have $\mu_{E} (E') = 0$ or $1$. 
\end{theorem}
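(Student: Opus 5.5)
Given $E'\subseteq E$ almost invariant with respect to $\left(\digamma_{\hspace{-4pt} E},\mu_E\right)$, the plan is to build from $E'$ a Borel set $\widetilde{E}\subseteq\widehat{\mathbb{C}}$ that is almost invariant with respect to $(\digamma,\mu)$ and satisfies $\widetilde{E}\cap E = E'$ up to a $\mu$-null set. Ergodicity of $\mu$ then forces $\mu(\widetilde{E})\in\{0,1\}$. If $\mu(\widetilde{E})=0$, then $\mu_E(E')=0$. If $\mu(\widetilde{E})=1$, then $\mu(E\cap\widetilde{E})=\mu(E)$, so $\mu_E(E')=1$. This mirrors the classical argument for induced maps, where $E'$ is saturated along orbit segments that run between consecutive visits to $E$.

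\textbf{Step 1 (almost invariance).} Almost invariance of $E'$ for the induced system gives a Borel set $E''\subseteq E'$ with $\mu(E'')=\mu(E')$ and $\digamma_{\hspace{-4pt} E}^{\dagger}(E'')\subseteq E'$. Unwinding the definition of $\digamma_{\hspace{-4pt} E}$, this says: if $z\in E$, $w\in E''$, $w\in\digamma^{\mathfrak{n}_E(z)}(z)$, then $z\in E'$.

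\textbf{Step 2 (the saturation).} Define $\widetilde{E}$ to consist of those points whose backward excursions land in $E'$: $x\in\widetilde{E}$ when $x\in E'$, or when $x\notin E$ and $x\in\digamma^{k}(z)$ for some $z\in E'$ and some $k<\mathfrak{n}_E(z)$. Measurability follows from the Borel-ness of images and preimages recalled in Section \ref{sec:prelims}, together with countable unions over $k$ and over the level sets $\{\mathfrak{n}_E=k\}$. Theorem \ref{thm:poinc} makes $\mathfrak{n}_E$ finite $\mu$-almost everywhere on $E$. Applied to $\digamma^{\dagger}$, which also preserves $\mu$ through the usual symmetry of the invariance relation for holomorphic correspondences, it shows that $\mu$-almost every point off $E$ is reached from $E$, so the saturation exhausts the relevant mass.

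\textbf{Step 3 (verification).} We must check $\digamma^{\dagger}(\widetilde{E}_0)\subseteq\widetilde{E}$ for a full-measure subset $\widetilde{E}_0\subseteq\widetilde{E}$, taking out the sets where $\mathfrak{n}_E$ is infinite and the image of $E'\setminus E''$. For a preimage $y$ of a point $x\in\widetilde{E}_0$ there are two cases. If $x\notin E$, then $y$ lies on the same excursion, one step earlier. If $x\in E''$, then either $y\notin E$ and we trace back to its last entry point, or $y\in E$ with $\mathfrak{n}_E(y)=1$ and Step 1 applies directly. Finally, $\widetilde{E}\cap E=E'$ modulo null sets by construction.

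\textbf{Main obstacle.} The multivaluedness is the hardest point. A point $x\notin E$ can be reached from several points of $E$, some in $E'$ and some in $E\setminus E'$, so $\widetilde{E}$ and its analogue built from $E\setminus E'$ may overlap. Moreover, a point of $E$ reached along a path of length greater than $\mathfrak{n}_E(z)$ need not lie in $\digamma_{\hspace{-4pt} E}(z)$, because $\digamma_{\hspace{-4pt} E}$ only retains the minimal return time. The first remedy I would try is to show that these overlaps are $\mu$-null. This would use $\digamma^*$-invariance to compare $\mu$ of the overlap region with $d_{\rm top}$-weighted preimage counts, together with Remark \ref{rmk:FGammak}(2), which makes the complement $E\setminus E'$ almost invariant for the induced system as well. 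If that fails, I would lift to the orbit space $\mathscr{O}^{\Gamma}(\widehat{\mathbb{C}})$ and run the classical induced-map argument for $\sigma^{\Gamma}$, whose first return to $\Pi_0^{-1}(E)$ is single-valued. I would then push the result back down via $\Pi_0$.
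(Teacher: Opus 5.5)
Your Step 3 is where the argument breaks, and the break cannot be repaired. For $x\notin E$ you say a preimage $y\in\digamma^{\dagger}(x)$ ``lies on the same excursion, one step earlier''. But $y$ is an arbitrary preimage of $x$, not the predecessor of $x$ on the chosen path from $z\in E'$. This already fails for non-invertible maps: saturating $E'$ by forward images suits forward invariance, whereas Londhe's almost invariance is tested with $\digamma^{\dagger}$. Likewise, for $x\in E''$ and $y\notin E$, nothing forces a backward path from $y$ to reach $E'$, let alone at a time $k<\mathfrak{n}_E(z)$. Since $\mathfrak{n}_E$ is a minimum over all branches, an excursion along one branch can be invisible to $\digamma_{\hspace{-4pt} E}$. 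Your Step 2 appeal to Theorem \ref{thm:poinc} for $\digamma^{\dagger}$ is also unjustified, because $\digamma^{*}$-invariance does not give invariance under the adjoint in general.

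The obstacle you flag is in fact fatal: the statement is false as written. Take $\Gamma=\Delta+\mathrm{graph}(z\mapsto\lambda z)$ with $\lambda=e^{2\pi i\theta}$, $\theta\notin\mathbb{Q}$, and let $\mu$ be normalized arc length on the unit circle. Then $\mu$ is $\digamma^{*}$-invariant and charges no polar set. It is also ergodic in Londhe's sense, since an almost invariant $A$ satisfies $\lambda^{-1}A=A$ modulo $\mu$-null sets. Let $E$ be a short arc with $\lambda E\cap E=\emptyset$. Because $z\in\digamma(z)$, we get $\mathfrak{n}_E\equiv 1$ and $\digamma_{\hspace{-4pt} E}(z)=\{z,\lambda z\}\cap E=\{z\}$, so $\digamma_{\hspace{-4pt} E}=\mathrm{id}_E$. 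Every Borel $B\subseteq E$ is then almost invariant for $(\digamma_{\hspace{-4pt} E},\mu_E)$, and half of $E$ has $\mu_E$-measure $1/2$. In this example your $\widetilde{E}$ is just $E'$, and $\lambda^{-1}E''\subseteq\digamma^{\dagger}(E'')$ lies outside $E$ --- exactly the case you could not close. Neither fallback helps. The overlaps are not null. The first return of $\sigma^{\Gamma}$ to $\Pi_0^{-1}(E)$ uses path-wise return times, which can exceed $\mathfrak{n}_E$, so it does not descend to $\digamma_{\hspace{-4pt} E}$.

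The paper's own proof takes a different route but fails at the same point. It uses Lemma \ref{prop:usedintheorem} to claim that the same witness $B'$ makes $B$ almost invariant for $(\digamma,\mu)$, on the grounds that every $z\in\digamma^{\dagger}(B')$ has $\mathfrak{n}_E(z)=1$. This ignores preimages outside $E$; in the example above these are $\lambda^{-1}B'$. Even granting the lemma, $\mu(B)\in\{0,1\}$ gives $\mu_E(B)\in\{0,1\}$ only when $\mu(E)=1$. Applied to $B=E$, which is always almost invariant for the induced system, it would force $\mu(E)\in\{0,1\}$. So neither route proves the theorem, and an extra hypothesis is needed. For instance, the conclusion does hold when $\mu(E)=1$: $\digamma^{*}$-invariance gives $\mu(\digamma(E^{{\rm c}}))\le d_{{\rm top}}\,\mu(E^{{\rm c}})=0$, so you may discard from $B'$ all points having a preimage outside $E$.
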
 

We now introduce the \emph{Koopman operator} pertaining to the holomorphic correspondence  $\digamma$, denoted by $\mathcal{T}_{\digamma}$ defined on $\mathscr{L}^{1} \left( \widehat{\mathbb{C}}, \mu \right)$, whose action on the points in $\widehat{\mathbb{C}}$ is given by 
\begin{equation} 
\label{eqn:koopman} 
\left( \mathcal{T}_{\digamma} f \right) (z)\ \ =\ \ \frac{1}{d_{{\rm top}}} \sum_{w\, \in\, \digamma^{\dagger} (z)} f(w). 
\end{equation} 
It is a simple observation that $\mathcal{T}_{\digamma}$ is a bounded, linear operator with $\left\| \mathcal{T}_{\digamma} \right\| = 1$. 

The following theorem gives some important characterisations of ergodicity of holomorphic correspondences, measure-theoretically and using the Koopman operator. 

\begin{theorem} 
\label{thm:equierg}
Let $\mathcal{T}_{\digamma}$ be the Koopman operator pertaining to the holomorphic correspondence  $\digamma$, as defined in the Equation \eqref{eqn:koopman}. Suppose $\mu \in \mathscr{M} \left( \widehat{\mathbb{C}} \right)$ is a $\digamma^{*}$-invariant measure. Then the following statements are equivalent. 
\begin{enumerate} 
\item $\mu$ is ergodic with respect to the holomorphic correspondence $\digamma$. 
\item Every Borel set $E$ of strict positive measure $\mu$ satisfies $\displaystyle{\mu \left( \bigcup_{n\, \ge\, 1} \left( \digamma^{\dagger} \right)^{n} (E) \right) = 1}$. 
\item Suppose $E$ and $E'$ are Borel sets of strict positive measure $\mu$, then there exists $n \in \mathbb{Z}_{+}$ such that $\displaystyle{\mu \left( \left( \digamma^{\dagger} \right)^{n} (E) \cap E' \right) > 0}$. 
\item Suppose $f \in \mathscr{L}^{1} \left( \widehat{\mathbb{C}}, \mu \right)$ satisfies $\mathcal{T}_{\digamma} f \ge f,\ \mu$-almost everywhere, then $f$ is a constant function, $\mu$-almost everywhere. 
\item For $1 \le p < \infty$, suppose $f \in \mathscr{L}^{p} \left( \widehat{\mathbb{C}}, \mu \right)$ satisfies $\mathcal{T}_{\digamma} f = f$, $\mu$-almost everywhere, then $f$ is a constant function, $\mu$-almost everywhere. 
\end{enumerate} 
\end{theorem}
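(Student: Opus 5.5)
We prove the cycle of implications $(1) \Rightarrow (2) \Rightarrow (3) \Rightarrow (1)$ and separately $(1) \Leftrightarrow (4)$ and $(1) \Leftrightarrow (5)$.

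The basic tool is the invariance identity
\[ \int_{\widehat{\mathbb{C}}} \mathcal{T}_{\digamma} f \, \mathrm{d}\mu \;=\; \int_{\widehat{\mathbb{C}}} f\, \mathrm{d}\mu \qquad \text{for } f \in \mathscr{L}^{1}(\widehat{\mathbb{C}}, \mu), \]
which follows directly from $\digamma^{*}$-invariance. Its consequence for indicators is $\mathcal{T}_{\digamma}\mathbf{1}_{B} \le \mathbf{1}_{\digamma(B)}$ pointwise. This holds because $w \in \digamma^{\dagger}(z) \cap B$ forces $z \in \digamma(B)$. Integrating gives $\mu(B) \le \mu(\digamma(B))$, and in the same way $\mu(B)\le\mu(\digamma^{\dagger}(B))$ up to the normalisation by degrees.

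\textbf{$(1)\Rightarrow(2)$.} Set $A = \bigcup_{n \ge 1} (\digamma^{\dagger})^{n}(E)$. By construction $\digamma^{\dagger}(A) \subseteq A$, so $A$ is almost invariant with $E'=A$ in Definition \ref{defn:Gammaai}. Ergodicity gives $\mu(A)\in\{0,1\}$. To exclude $\mu(A)=0$, note that if $\mu(\digamma^{\dagger}(E))=0$, then testing invariance against $\mathbf{1}_{\digamma^{\dagger}(E)}$ gives
\[ 0 = d_{\rm top}\,\mu\bigl(\digamma^{\dagger}(E)\bigr) = \int \#\bigl(\digamma^{\dagger}(z)\cap \digamma^{\dagger}(E)\bigr)\,\mathrm{d}\mu(z) \;\ge\; \mu(E)>0, \]
a contradiction. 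The inequality holds because every $z\in E$ has at least one preimage, and that preimage lies in $\digamma^{\dagger}(E)$ because $(\digamma^{\dagger})^{\dagger}=\digamma$ contains $z$. Strictly, one needs $z\in\digamma(\digamma^{\dagger}(z))$, which is true by symmetry of the relation.

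\textbf{$(2)\Rightarrow(3)$.} Apply (2) to $E$. Then $\mu(E'\cap A)=\mu(E')>0$, and countable subadditivity yields some $n$ with $\mu\bigl((\digamma^{\dagger})^{n}(E)\cap E'\bigr)>0$.

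\textbf{$(3)\Rightarrow(1)$.} Let $E$ be almost invariant with $0<\mu(E)<1$. By Remark \ref{rmk:FGammak}, $E^{\rm c}$ is almost invariant and $E$ is almost invariant for every $\digamma^{k}$. Choose the full-measure subset $E_0\subseteq E$ with $(\digamma^{\dagger})^{n}(E_0)\subseteq E$ for all $n$; iterating the definition takes countably many null removals, which is what Londhe's remark provides. Applying (3) to $E_0$ and $E^{\rm c}$ then gives a set of positive measure inside $E\cap E^{\rm c}$, a contradiction.

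\textbf{$(1)\Rightarrow(4)$.} Suppose $\mathcal{T}_{\digamma}f \ge f$ almost everywhere. Since both sides have the same integral, $\mathcal{T}_{\digamma}f=f$ almost everywhere. For a rational $c$, let $B_c=\{f>c\}$. We aim to show $B_c$ is almost invariant, in the sense that almost every $z\in B_c$ has all preimages in $B_c$. Apply the convex nonexpansive inequality $\mathcal{T}_{\digamma}(f-c)^{+}\ge (\mathcal{T}_{\digamma}(f-c))^{+}=(f-c)^{+}$, which holds since $\mathcal{T}_{\digamma}$ is positive and fixes constants. Equal integrals force equality, that is, $\frac1{d_{\rm top}}\sum_{w}(f(w)-c)^{+} = \bigl(\frac1{d_{\rm top}}\sum_w (f(w)-c)\bigr)^{+}$ almost everywhere. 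Equality in the triangle inequality means that, for almost every $z$, the values $f(w)-c$ over $w\in\digamma^{\dagger}(z)$ do not have strictly mixed signs. Combined with the average being $f(z)-c$, this says that if $f(z)>c$ then all preimages satisfy $f(w)\ge c$. Running the same argument with $c$ replaced by $c-\varepsilon$ over rationals gives almost invariance of $\{f\ge c\}$. Ergodicity then forces each such set to have measure $0$ or $1$, so $f$ is constant.

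\textbf{$(4)\Rightarrow(5)$.} This is immediate because $\mathscr{L}^{p}\subseteq\mathscr{L}^{1}$ on a probability space.

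\textbf{$(5)\Rightarrow(1)$.} For an almost invariant $E$, we show $\mathcal{T}_{\digamma}\mathbf{1}_{E}=\mathbf{1}_{E}$ almost everywhere. On $E'$ all preimages lie in $E$, so $\mathcal{T}_{\digamma}\mathbf{1}_{E}=1$ there, giving $\mathcal{T}_{\digamma}\mathbf{1}_{E}\ge\mathbf{1}_{E}$ almost everywhere. Equal integrals together with the bound $0\le \mathcal{T}_{\digamma}\mathbf{1}_E\le 1$ then force equality almost everywhere. By (5), $\mathbf{1}_{E}$ is constant, so $\mu(E)\in\{0,1\}$.

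\textbf{Main obstacle.} The hardest step is $(1)\Rightarrow(4)$: passing from the harmonic-type identity $\mathcal{T}_{\digamma}f=f$ to almost invariance of the superlevel sets, which requires the strict-convexity equality argument above. The other delicate point is the null-set bookkeeping in $(3)\Rightarrow(1)$, where the full-measure set $E_0$ must work for all iterates simultaneously.
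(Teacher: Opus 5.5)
Your proof is correct. Its skeleton matches the paper's:
\begin{itemize}
\item the same set $\bigcup_{n\ge1}(\digamma^{\dagger})^{n}(E)$ in $(1)\Rightarrow(2)$;
\item the same countable-subadditivity step in $(2)\Rightarrow(3)$;
\item the same full-measure subset with $(\digamma^{\dagger})^{n}(E_0)\subseteq E$ for all $n$ in $(3)\Rightarrow(1)$.
\end{itemize}
The difference is in where the real work is done.

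The paper outsources two steps to Lemma~\ref{lem:londhe}. It uses part (1), $\mu(E)\le\mu(\digamma^{\dagger}(E))$, to rule out $\mu(E_\infty)=0$. It uses part (2), that ergodicity is equivalent to $\mathcal{T}_{\digamma}$-fixed $\mathscr{L}^1$ functions being constant, for $(1)\Rightarrow(4)$ and $(1)\Rightarrow(5)$. It then closes with $(4)\Rightarrow(1)$ and $(5)\Rightarrow(1)$ by evaluating $\mathcal{T}_{\digamma}\chi_B$ on $B$ and on $B^{\rm c}$, which tacitly uses the almost invariance of $B^{\rm c}$.

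You proceed differently at three points:
\begin{itemize}
\item You get positivity of $\mu(\digamma^{\dagger}(E))$ directly from $\digamma^{*}$-invariance. In fact $\mathcal{T}_{\digamma}\mathbf{1}_{\digamma^{\dagger}(E)}\ge\mathbf{1}_{E}$ pointwise gives Londhe's inequality in full.
\item You replace Lemma~\ref{lem:londhe}(2) by the superlevel-set argument, based on the equality case of $\mathcal{T}_{\digamma}(g^{+})\ge(\mathcal{T}_{\digamma}g)^{+}$.
\item You route $(4)\Rightarrow(5)\Rightarrow(1)$, and in $(5)\Rightarrow(1)$ you use the equal-integrals trick instead of the almost invariance of the complement.
\end{itemize}

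The paper's route is shorter. However, Lemma~\ref{lem:londhe} is quoted for measures that put no mass on polar sets, and Theorem~\ref{thm:equierg} does not assume this, so the paper's argument tacitly needs the extra hypothesis. Your argument uses only $\digamma^{*}$-invariance, so it proves the theorem as stated. The cost is one additional convexity argument.

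To make it airtight, record one fact you use silently, both here and behind the countable null removals in $(3)\Rightarrow(1)$. Testing invariance against $\mathbf{1}_N$ shows that $\mu(\digamma(N))=0$ whenever $\mu(N)=0$. Hence $\mathcal{T}_{\digamma}$ is well defined on $\mu$-classes, and your pointwise identities do not depend on the chosen Borel representative of $f$. Also note that $\mathcal{T}_{\digamma}1=1$ requires preimages to be counted with multiplicity, as in the paper's convention.

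A minor point: in $(1)\Rightarrow(2)$, a preimage of $z\in E$ lies in $\digamma^{\dagger}(E)$ simply by the definition of $\digamma^{\dagger}(E)$. No symmetry of the relation is needed.
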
 

While statements (2) and (3) of Theorem \ref{thm:equierg} concern backward orbits, the following result addresses the behavior of forward orbits under an ergodic correspondence.

\begin{theorem}
\label{thm:dense}
Let $\digamma$ be a holomorphic correspondence defined on $\widehat{\mathbb{C}}$ and $\mu \in \mathscr{M} \left( \widehat{\mathbb{C}} \right)$ be an ergodic measure with respect to $\digamma$ such that $\mu(B) > 0$ for any non-empty open set $B \subseteq \widehat{\mathbb{C}}$. Then, for $\mu$-almost every point in $\widehat{\mathbb{C}}$, the forward images under $\digamma$ is dense in $\widehat{\mathbb{C}}$. 
\end{theorem}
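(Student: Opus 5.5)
We will reduce the statement to a countable family of ``hitting'' statements, one for each element of a countable basis, and obtain each of these from statement (2) of Theorem \ref{thm:equierg}. Fix a countable basis $\{B_{k}\}_{k\, \ge\, 1}$ of the topology of $\widehat{\mathbb{C}}$, for instance spherical discs with rational centres and rational radii. For each $k$ put
\[ G_{k}\ \ =\ \ \bigcup_{n\, \ge\, 1} \left( \digamma^{\dagger} \right)^{n} (B_{k}), \]
which is Borel because images and pre-images of Borel sets under $\digamma$ (hence under its iterates) are Borel, as recalled in Section \ref{sec:prelims}.

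The key observation is a duality between forward images and backward images. A point $z$ lies in $\left( \digamma^{\dagger} \right)^{n}(B_{k})$ if and only if there is a $w \in B_{k}$ with $(z, w) \in \big| \Gamma^{\circ n} \big|$, that is, if and only if $\digamma^{n}(z) \cap B_{k} \ne \emptyset$. To justify this we need $\left( \digamma^{\dagger} \right)^{n} = \left( \digamma^{n} \right)^{\dagger}$ as set-valued maps. This holds because the support of a composition is the classical relational composition of the supports, and adjoining reverses the order of composition. Hence $G_{k}$ is exactly the set of points whose forward orbit $\bigcup_{n\, \ge\, 1} \digamma^{n}(z)$ meets $B_{k}$.

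By hypothesis $\mu(B_{k}) > 0$, since $B_{k}$ is a non-empty open set. Since $\mu$ is ergodic, statement (2) of Theorem \ref{thm:equierg} gives $\mu(G_{k}) = 1$ for every $k$. Set $G = \bigcap_{k\, \ge\, 1} G_{k}$. As a countable intersection of full-measure sets, $\mu(G) = 1$. For every $z \in G$ the forward orbit meets every basic open set $B_{k}$, and therefore meets every non-empty open set. Thus the forward orbit of each $z \in G$ is dense in $\widehat{\mathbb{C}}$, which proves the theorem.

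The only delicate step is the set-theoretic identification of $\left( \digamma^{\dagger} \right)^{n}$ with the pre-image map of $\digamma^{n}$. Multiplicities play no role here, because only the supports matter for the question of whether an orbit hits a set. With this identification in hand, the rest is a direct application of Theorem \ref{thm:equierg}. Note that this argument uses (1)$\Rightarrow$(2) of Theorem \ref{thm:equierg}, which is proved later in the paper. If that direction needed to be proved here, we would argue as follows: the set $G_{k} \cup B_{k}$ is backward-closed up to null sets, hence almost invariant, and it has positive measure, so ergodicity forces its measure to be $1$.
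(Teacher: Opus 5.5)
Your proposal is correct and is essentially the paper's proof. The paper sets $A_n=\bigcup_{i\ge 0}(\digamma^{i})^{\dagger}(B_n)$, which is exactly your fallback set $G_k\cup B_k$; it notes that this set is almost invariant and contains $B_n$, so ergodicity forces $\mu(A_n)=1$, and then it intersects over the countable base. Your main route through (1)$\Rightarrow$(2) of Theorem \ref{thm:equierg} also works, but it needs the extra input $\mu(B_k)\le\mu(\digamma^{\dagger}(B_k))$ to get positivity (Lemma \ref{lem:londhe}(1), stated there under a no-polar-mass hypothesis that this theorem does not assume); including $i=0$, as in your fallback, makes positivity immediate from $B_n\subseteq A_n$.
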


Finally, we conclude this section by stating an analogue of the Rokhlin-Kakutani lemma wherein we construct a weaker version of a Rokhlin tower of height $n$ with base $E$ and a residual set of measure atmost $\epsilon$.

\begin{theorem} 
\label{thm:rkforc}
Let $\digamma$ be a holomorphic correspondence defined on $\widehat{\mathbb{C}}$. Suppose $\mu = \left( \Pi_{0} \right)_{*}\mathcal{M}$ for some non-atomic, ergodic $\mathcal{M} \in \mathscr{M}_{\sigma^{\Gamma}} \left( \mathscr{O}^{\Gamma} \left( \widehat{\mathbb{C}} \right) \right)$. Then, for any $n \in \mathbb{Z}_{+}$ and $\epsilon > 0$, there exists a Borel set $E \in \mathcal{B}_{\mu}$ such that $\overline{\mu} \left( E \cup \digamma (E) \cup \cdots \cup \digamma^{n - 1} (E) \right) > 1- \epsilon$ where $\mathcal{B}_{\mu}$ denotes the completion of Borel $\sigma$-algebra $\mathcal{B}$ with respect to the measure $\mu$ and $\overline{\mu}$ denotes the completion of the measure $\mu$.
\end{theorem}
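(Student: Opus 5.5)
We transfer the classical Rokhlin--Kakutani lemma for the invertible system $\left( \mathscr{O}^{\Gamma} \left( \widehat{\mathbb{C}} \right), \mathcal{M}, \sigma^{\Gamma} \right)$ down to $\widehat{\mathbb{C}}$ via the projection $\Pi_{0}$. Since $\mathcal{M}$ is $\sigma^{\Gamma}$-invariant, ergodic and non-atomic, and $\sigma^{\Gamma}$ is a homeomorphism of the compact metric space $\mathscr{O}^{\Gamma} \left( \widehat{\mathbb{C}} \right)$, the system is aperiodic $\mathcal{M}$-almost everywhere. Indeed, an ergodic measure giving positive mass to the periodic points of some period $p$ would be carried by a single finite orbit, and would then have atoms. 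The classical Rokhlin lemma therefore applies. For the given $n$ and $\epsilon$, it yields a Borel set $\mathcal{F} \subseteq \mathscr{O}^{\Gamma} \left( \widehat{\mathbb{C}} \right)$ such that $\mathcal{F}, \sigma^{\Gamma} \mathcal{F}, \cdots, \left( \sigma^{\Gamma} \right)^{n-1} \mathcal{F}$ are pairwise disjoint and $\mathcal{M} \left( \bigcup_{i=0}^{n-1} \left( \sigma^{\Gamma} \right)^{i} \mathcal{F} \right) > 1 - \epsilon$. If one wants to be self-contained, I would reprove it in the standard way. Take a base $\mathcal{A}$ with $0 < \mathcal{M}(\mathcal{A}) < \epsilon/n$, which exists by non-atomicity. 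By ergodicity and invertibility, almost every point enters $\mathcal{A}$ under backward iteration. Stack the Kakutani skyscraper over $\mathcal{A}$ using the return time $r_{\mathcal{A}}$, and let $\mathcal{F}$ be the union of the levels $\left( \sigma^{\Gamma} \right)^{jn} \left( \{ r_{\mathcal{A}} = k \} \cap \mathcal{A} \right)$ with $(j+1)n \le k$. The uncovered part then lies in the top $n$ levels of each column, and its measure is at most $n \mathcal{M}(\mathcal{A}) < \epsilon$.

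Next, set $E = \Pi_{0}(\mathcal{F})$. The key observation is the intertwining $\Pi_{0} \circ \left( \sigma^{\Gamma} \right)^{i} = \Pi_{i}$, together with the fact that $\Pi_{i}(\mathfrak{X}) \in \digamma^{i} \left( \Pi_{0}(\mathfrak{X}) \right)$ for every path $\mathfrak{X}$, since consecutive entries of a path lie on the varieties $\Gamma_{\alpha_{i}}$. This gives
\[ \Pi_{0} \left( \left( \sigma^{\Gamma} \right)^{i} \mathcal{F} \right) \ \ \subseteq\ \ \digamma^{i} (E), \qquad 0 \le i \le n-1 . \]
Hence
\[ \Pi_{0}^{-1} \left( E \cup \digamma (E) \cup \cdots \cup \digamma^{n-1}(E) \right) \ \ \supseteq\ \ \bigcup_{i=0}^{n-1} \left( \sigma^{\Gamma} \right)^{i} \mathcal{F}, \]
and pushing forward gives $\overline{\mu} \left( \bigcup_{i<n} \digamma^{i}(E) \right) \ge \mathcal{M} \left( \bigcup_{i<n} \left( \sigma^{\Gamma} \right)^{i} \mathcal{F} \right) > 1 - \epsilon$. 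Disjointness of the tower floors is not asserted in the statement, which is why only this weaker tower appears. That is consistent, since projections of disjoint floors may overlap in $\widehat{\mathbb{C}}$.

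The main obstacle is measurability. The image $\Pi_{0}(\mathcal{F})$ of a Borel set under the continuous map $\Pi_{0}$ is in general only analytic, and the same holds for $\digamma^{i}(E)$ once $E$ is no longer Borel. This is exactly why the statement is phrased with $\mathcal{B}_{\mu}$ and $\overline{\mu}$. I would handle it as follows. Analytic subsets of the Polish space $\widehat{\mathbb{C}}$ are universally measurable, so $E \in \mathcal{B}_{\mu}$. The set $\digamma^{i}(E) = \Pi_{i} \left( \Pi_{0}^{-1}(E) \right)$ is again a continuous image of an analytic set, hence analytic and $\overline{\mu}$-measurable. This makes the pushforward inequality above legitimate. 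Alternatively, by inner regularity one can first replace $\mathcal{F}$ by a compact $\mathcal{K} \subseteq \mathcal{F}$ losing at most $\epsilon/(2n)$ in measure. Then $E = \Pi_{0}(\mathcal{K})$ is compact, and the sets $\digamma^{i}(E)$ are compact too, as images of the compact set $\Pi_{0}^{-1}(E)$ under $\Pi_{i}$. This even gives a Borel $E$, and I would present this version as the cleaner route.
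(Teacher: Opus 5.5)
Your proposal is correct and follows the paper's route. You apply the Rokhlin--Kakutani lemma to $(\mathscr{O}^{\Gamma}(\widehat{\mathbb{C}}),\mathcal{M},\sigma^{\Gamma})$, set $E=\Pi_{0}(\mathcal{F})$, use $\Pi_{0}((\sigma^{\Gamma})^{i}\mathcal{F})\subseteq\digamma^{i}(E)$, and push forward through $\Pi_{0}$. Your checks on the measurability of $\digamma^{i}(E)$ (which the paper passes over) and your compact-base refinement giving a Borel $E$ are sound additions; in the latter, just run the lemma with $\epsilon/2$ so that the $\epsilon/(2n)$ losses still leave total measure above $1-\epsilon$.
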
 

The analysis developed in this work, except Theorem \ref{thm:dense}, extends to the setting of an arbitrary compact complex manifold $X$. Since Theorem \ref{thm:dense} uses the compact metric structure of $\widehat{\mathbb{C}}$ an extension to compact K\"{a}hler manifolds is possible in this case. The restriction to the Riemann sphere $\widehat{\mathbb{C}}$ is adopted, primarily for notational simplicity and expository convenience.

\section{Proof of Theorem \ref{thm:kacforc}}
\label{sec:kac_proof}

Our version of the Kac's theorem, as stated in Theorem \ref{thm:kacforc} draws its motivation from the Kac's theorem for maps, as stated in Theorem \ref{thm:kacfort}. 

\begin{proof}[of Theorem \ref{thm:kacforc}] 
Let $\mu$ be a measure satisfying the hypothesis of Theorem \ref{thm:kacforc} and $E \subseteq \widehat{\mathbb{C}}$ such that $\mu (E) > 0$. In order to prove $\displaystyle{\int_{E} \mathfrak{n}_{E} (z)\, \mathrm{d}\mu_{E} (z) \le \frac{1}{\mu (E)}}$, it is sufficient to prove $\displaystyle{\int_{E} \mathfrak{n}_{E} (z)\, \mathrm{d}\mu (z) \le 1}$. 

Since $\mu \in \mathscr{E}^{\Gamma}$, by definition we have $\mu = \left( \Pi_{0} \right)_{*} \mathcal{M}$ for some $\sigma^{\Gamma}$-ergodic measure $\mathcal{M} \in \mathscr{M}_{\sigma^{\Gamma}} \left( \mathscr{O}^{\Gamma} \left( \widehat{\mathbb{C}} \right) \right)$. Further, since $\mathscr{O}^{\Gamma} (E) = \left( \Pi_{0}\right)^{-1} (E) \subseteq \mathscr{O}^{\Gamma} (\widehat{\mathbb{C}})$, our hypothesis $\mu (E) > 0$ implies $\mathcal{M} \left( \mathscr{O}^{\Gamma} (E) \right) > 0$. Consequently, Theorem \ref{thm:kacfort} is applicable here. The vital step in this proof concerns the comparison of $\mathfrak{n}_{E} (z)$, as written in Definition \ref{defn:recc} for the set-valued map $\digamma$ and $n_{\left( \Pi_{0} \right)^{-1} E} \left( \mathfrak{X} \left( z; \boldsymbol{\gamma} \right)_{\boldsymbol{l}} \right)$ as defined in Equation \eqref{eqn:1}, for the shift map $\sigma^{\Gamma}$ that gives 
\[ 
\mathfrak{n}_{E}(z) \ \ = \ \mathfrak{n}_{E} \left( \Pi_{0} \left( \mathfrak{X} \left( z; \boldsymbol{\gamma} \right)_{\boldsymbol{l}} \right) \right)\ \ \le\ \ n_{\left( \Pi_{0} \right)^{-1} E} \left( \mathfrak{X} \left( z; \boldsymbol{\gamma} \right)_{\boldsymbol{l}} \right)\ \ \ \ \forall \  \mathfrak{X} \left( z; \boldsymbol{\gamma} \right)_{\boldsymbol{l}} \in \left( \Pi_{0} \right)^{-1} E. 
\]
Thus, 
\begin{eqnarray*} 
\int_{E} \mathfrak{n}_{E} (z) \mathrm{d}\mu (z) & = & \int_{\mathscr{O}^{\Gamma} (E)} \left( \mathfrak{n}_{E} \circ \Pi_{0}\right) \left( \mathfrak{X} \left( z; \boldsymbol{\gamma} \right)_{\boldsymbol{l}} \right) \mathrm{d}\mathcal{M} \left( \mathfrak{X} \left( z; \boldsymbol{\gamma} \right)_{\boldsymbol{l}} \right) \\ 
& \le & \int_{\left( \Pi_{0} \right)^{-1} E} n_{\left( \Pi_{0} \right)^{-1} E} \left( \mathfrak{X} \left( z; \boldsymbol{\gamma} \right)_{\boldsymbol{l}} \right) \mathrm{d}\mathcal{M} \left( \mathfrak{X} \left( z; \boldsymbol{\gamma} \right)_{\boldsymbol{l}} \right) \\ 
& = & 1 \hspace{7cm} \text{(by Theorem \ref{thm:kacfort})}, 
\end{eqnarray*} 
thereby, completing the proof. 
\end{proof} 

Using a similar argument, we can prove the following version of Theorem \ref{thm:kacforc} for the backward return time $\mathfrak{n}^{\dagger}_{E} (z)$, as described in Remark \ref{defn:returntymback}. The proof of the same will follow \emph{mutatis mutandis} to the proof of Theorem \ref{thm:kacforc}.

\begin{corollary}
\label{cor:kac}
Let $\digamma$ be a holomorphic correspondence defined on $\widehat{\mathbb{C}}$. Suppose $\nu \in \mathscr{E}^{\Gamma}$ is a $\digamma^{*}$-invariant measure. Let $E \subseteq \widehat{\mathbb{C}}$ be such that $\nu (E) > 0$. Then, the average of the first return time satisfies 
\[ \int_{E} \mathfrak{n}^{\dagger}_{E} (z)\, \mathrm{d}\nu_{E} (z)\ \ \le\ \ \frac{1}{\nu (E)},\ \ \ \ \text{where}\ \ \nu_{E} ( E' ) = \dfrac{\nu \left( E \cap E' \right)}{\nu (E)} \ \ \ \text{for any Borel subset}\ E' \subseteq \widehat{\mathbb{C}}. \] 
\end{corollary}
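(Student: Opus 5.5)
We follow the proof of Theorem \ref{thm:kacforc}, with the left shift replaced by the right shift $\left(\sigma^{\Gamma}\right)^{-1}$. Write $\nu = \left(\Pi_{0}\right)_{*}\mathcal{M}$ for some $\sigma^{\Gamma}$-ergodic $\mathcal{M} \in \mathscr{M}_{\sigma^{\Gamma}}\left(\mathscr{O}^{\Gamma}(\widehat{\mathbb{C}})\right)$. Since $\sigma^{\Gamma}$ is a homeomorphism of the compact metric space $\mathscr{O}^{\Gamma}(\widehat{\mathbb{C}})$, the inverse $\left(\sigma^{\Gamma}\right)^{-1}$ also preserves $\mathcal{M}$. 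Moreover, a set is $\sigma^{\Gamma}$-invariant if and only if it is $\left(\sigma^{\Gamma}\right)^{-1}$-invariant, so $\mathcal{M}$ is ergodic for $\left(\sigma^{\Gamma}\right)^{-1}$ as well. Finally, $\mathcal{M}\left((\Pi_{0})^{-1}E\right) = \nu(E) > 0$. Hence Theorem \ref{thm:kacfort} applies to the system $\left(\mathscr{O}^{\Gamma}(\widehat{\mathbb{C}}), \mathcal{M}, \left(\sigma^{\Gamma}\right)^{-1}\right)$ and the set $(\Pi_{0})^{-1}E$. It gives
\[ \int_{(\Pi_{0})^{-1}E} n^{-}_{(\Pi_{0})^{-1}E}\,\mathrm{d}\mathcal{M} \ =\ 1, \]
where $n^{-}$ denotes the first return time under $\left(\sigma^{\Gamma}\right)^{-1}$.

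The key pointwise comparison is the following. Take $\mathfrak{X}(z;\boldsymbol{\gamma})_{\boldsymbol{l}} \in (\Pi_{0})^{-1}E$ and suppose $p = n^{-}_{(\Pi_{0})^{-1}E}\left(\mathfrak{X}(z;\boldsymbol{\gamma})_{\boldsymbol{l}}\right)$ is finite. Then $\Pi_{-p}\left(\mathfrak{X}(z;\boldsymbol{\gamma})_{\boldsymbol{l}}\right) = z^{(-p)}_{k_{p}}$ lies in $E$. By the construction \eqref{eqn:di_gamma} of backward paths, consecutive entries satisfy $\left(z^{(-i)}_{k_i}, z^{(-(i-1))}_{k_{i-1}}\right) \in \Gamma_{\beta_i}$, so this point belongs to $\left(\digamma^{\dagger}\right)^{p}(z) = (\digamma^{p})^{\dagger}(z)$. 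Consequently $\left(\digamma^{\dagger}\right)^{p}(z) \cap E \neq \emptyset$, and by Remark \ref{defn:returntymback} we obtain
\[ \mathfrak{n}^{\dagger}_{E}(z) \ \le\ n^{-}_{(\Pi_{0})^{-1}E}\left(\mathfrak{X}(z;\boldsymbol{\gamma})_{\boldsymbol{l}}\right). \]
If the right-hand side is infinite the inequality holds trivially. Such paths form an $\mathcal{M}$-null set anyway, by Poincaré recurrence for the invertible measure-preserving map $\left(\sigma^{\Gamma}\right)^{-1}$.

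To conclude, we change variables via $\Pi_{0}$:
\[ \int_{E}\mathfrak{n}^{\dagger}_{E}\,\mathrm{d}\nu \ =\ \int_{(\Pi_{0})^{-1}E}\mathfrak{n}^{\dagger}_{E}\circ\Pi_{0}\,\mathrm{d}\mathcal{M} \ \le\ \int_{(\Pi_{0})^{-1}E} n^{-}_{(\Pi_{0})^{-1}E}\,\mathrm{d}\mathcal{M} \ =\ 1. \]
Dividing by $\nu(E)$ gives the claimed bound for $\nu_{E}$.

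The step needing care is measurability of $\mathfrak{n}^{\dagger}_{E}$, so that the first integral makes sense. We handle it by noting that $\{z : \mathfrak{n}^{\dagger}_{E}(z) \le p\} = E \cap \bigcup_{q \le p}\digamma^{q}(E)$. This is a finite union of images of Borel sets, hence Borel by the fact quoted from \cite{lon:2024}. We also note that, unlike Theorem \ref{thm:kacforc}, the hypothesis that $\nu$ does not charge polar sets is not needed. The only other point to check is that the backward path entries land in $(\digamma^{\dagger})^{p}(z)$, which follows directly from the definition of the path space.
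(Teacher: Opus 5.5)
Your proposal is correct and follows exactly the route the paper intends for this corollary. The paper's ``mutatis mutandis'' proof rests on three steps:
\begin{enumerate}
\item writing $\nu = (\Pi_{0})_{*}\mathcal{M}$ with $\mathcal{M}$ also ergodic for $(\sigma^{\Gamma})^{-1}$;
\item applying Kac's theorem to the right shift on $(\Pi_{0})^{-1}E$;
\item using the pointwise bound $\mathfrak{n}^{\dagger}_{E}\circ\Pi_{0} \le n^{-}_{(\Pi_{0})^{-1}E}$.
\end{enumerate}
Your extra checks fill in details the paper leaves implicit: the measurability of $\mathfrak{n}^{\dagger}_{E}$ via $E \cap \bigcup_{q \le p}\digamma^{q}(E)$, and the null set of paths that never return.
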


We now make a brief observation concerning the ergodic measures for invertible measure preserving transformations. Consider $(X, \mathscr{B}, M, T)$ to be an invertible measure-preserving system. Then, it is an elementary observation that the measure $M$ is ergodic with respect to $T$ if and only if it is ergodic with respect to $T^{-1}$. When applied to the space $\mathscr{O}^{\Gamma}\left(\widehat{\mathbb{C}}\right)$, this yields another representation for any measure $\mu \in \mathscr{E}^{\Gamma}$ given by $(\Pi_{0})_{*}\mathcal{M}$, where $\mathcal{M}$ is ergodic with respect to $(\sigma^{\Gamma})^{-1}$. Readers interested in the proof of Corollary \ref{cor:kac} may need to use the above-mentioned idea, while we will invoke this implicitly in the proof of Proposition \ref{prop:erg}.

\subsection{Special cases and Examples}
\label{spcl cases}
Careful readers might have observed that Theorem \ref{thm:kacforc} and Corollary \ref{cor:kac} are achieved with a push-forward of an ergodic measure $\mathcal{M} \in \mathscr{M}_{\sigma^{\Gamma}} \left( \mathscr{O}^{\Gamma} \left( \widehat{\mathbb{C}} \right) \right)$. Here, we look at some interesting cases of the analogue of Kac's theorem when the concerned measure is ergodic with respect to $\digamma$. 

\begin{enumerate} 
\item[{\bf Case 1:}]  
Let $\digamma$ be a holomorphic correspondence defined on $\widehat{\mathbb{C}}$ and $\mu$ be an ergodic measure with respect to the holomorphic correspondence $\digamma$. Let $E \subseteq \widehat{\mathbb{C}}$ be an almost invariant set with respect to $\left( \digamma, \mu \right)$ such that $\mu (E) > 0$. Then, $\mu (E) = 1$. Further, Definition \ref{defn:Gammaai} gives $\displaystyle{\mathfrak{n}_{E}^{\dagger} (z) = 1}$, for $\mu$-almost every $z \in E$. Hence, $\displaystyle{\int_{E} \mathfrak{n}_{E}^{\dagger} (z) \mathrm{d}\mu (z) = 1}$. 
\end{enumerate} 

We next examine the special case where $E = \Omega_{{\rm DS}}$. Although this is a special case of the above, we treat it separately due to the independent dynamical significance of $\Omega_{{\rm DS}}$.

\begin{enumerate} 
\item[{\bf Case 2:}] 
Suppose $E = \Omega_{{\rm DS}}$ in Case 1 and the ergodic measure $\mu$ satisfies $\mu \left( \Omega_{{\rm DS}} \right) > 0$. Then, $\mu \left( \Omega_{{\rm DS}} \right) = 1$. Moreover, we know from Example \ref{ex:invmeas} that $\displaystyle{\digamma \left( z \right) \cap \Omega_{{\rm DS}} \ne \emptyset\ \forall z \in \Omega_{{\rm DS}}}$. Thus, $\mathfrak{n}_{\Omega_{{\rm DS}}}(z) = 1\ \forall z \in \Omega_{{\rm DS}}$. Hence, 
\begin{eqnarray}
\label{eqn:2}
\int_{\Omega_{{\rm DS}}} \mathfrak{n}_{\Omega_{{\rm DS}}} (z) \mathrm{d}\mu (z)\ \ =\ \ 1\ \ =\ \ \int_{\Omega_{{\rm DS}}} \mathfrak{n}_{\Omega_{{\rm DS}}}^{\dagger} (z) \mathrm{d}\mu (z).
\end{eqnarray} 
\end{enumerate} 

The following explicit example illustrates both the cases discussed above. We consider a holomorphic correspondence which can be associated with a finitely generated rational semigroup. Interested readers may refer to \cite{hinka:1996, bs:2017} for more details. 

\begin{example} 
\label{ex:hinka}
Consider the holomorphic correspondence $\digamma_{\hspace{-4pt} S}$ associated with the finitely generated rational semigroup $\left\langle z^{2},\frac{z^{2}}{2} \right\rangle$. Consider the Dinh-Sibony measure $\omega_{{\rm DS}}$, associated with $\digamma_{\hspace{-4pt} S}$, as defined in Example \ref{ex:invmeas}. Boyd, in \cite{boyd:1999} came up with an explicit expression for the same given by $\omega_{{\rm DS}}(B) = \dfrac{\lambda (\log B)}{2 \pi \log 2}$, for any Borel set $B \subseteq \widehat{\mathbb{C}}$. Here, $\log B$ stands for the image of the Borel set $B$ with respect to the principal branch of the logarithmic map and $\lambda$ is the Lebesgue measure on $\mathbb{C}$. Then, the measure $\omega_{{\rm DS}}$ is ergodic with respect to $\digamma_{\hspace{-4pt} S}$. In this setting, $\Omega_{{\rm DS}} = \left\{ z \in \widehat{\mathbb{C}} : 1 \le |z| \le 2\right\}$ and consequently, Equation \eqref{eqn:2} holds true. 
\end{example} 

We now present an example, within the same framework as in the Example \ref{ex:hinka}, demonstrating that the equality can be attained for nontrivial subsets of $\Omega_{{\rm DS}}$.

\begin{example}
\label{ex:1}
Consider $\digamma_{\hspace{-4pt} S},\ \omega_{{\rm DS}}$ and $\Omega_{{\rm DS}}$ as given in the Example \ref{ex:hinka}. Let $E \subsetneq \Omega_{{\rm DS}}$ be the annulus $\displaystyle{E = \left\{ z \in \Omega_{{\rm DS}} : 2^{\frac{1}{4}} \le |z| \le 2^{\frac{3}{4}} \right\}}$. Note that $\omega_{{\rm DS}} (E) = \dfrac{1}{2}$. Partitioning the set $E$ as 
\begin{equation} 
\label{eqn:3} 
E\ \ =\ \ \bigcup_{k\, \in\, \mathbb{Z}_{+}} E_{k}\ \ \ \ \text{where}\ \ \ E_{k}\ \ =\ \ \bigg\{ z \in E\ :\ \mathfrak{n}_{E} (z) = k \bigg\}, 
\end{equation} 
one can verify by induction that for each $k \in \mathbb{Z}_{+}$, we have
\[ E_{k}\ \ =\ \ \bigg\{ z \in \widehat{\mathbb{C}}\; :\; 2^{\frac{2^{k + 1} - 2}{2^{k + 2}}} \le |z| \le 2^{\frac{2^{k + 1} - 1}{2^{k + 2}}} \bigg\}\ \bigsqcup\ \bigg\{ z \in \widehat{\mathbb{C}}\; :\; 2^{\frac{2^{k + 1} + 1}{2^{k + 2}}} \le |z| \le 2^{\frac{2^{k + 1} + 2}{2^{k + 2}}} \bigg\}, \] 
that, in turn gives $\displaystyle{\int_{E} \mathfrak{n}_{E} (z) \, \mathrm{d}\omega_{{\rm DS}} = \sum_{k\, \in\, \mathbb{Z}^{+}} \left( \int_{E_{k}} k \, \mathrm{d}\omega_{{\rm DS}} \right) = \sum_{k\, \in\, \mathbb{Z}^{+}} k \, \omega_{{\rm DS}} \left( E_{k} \right)}$. Using the expression of $\omega_{{\rm DS}}$, as given in the Example \ref{ex:hinka}, we obtain $\displaystyle{\int_{E} \mathfrak{n}_{E} (z)\, \mathrm{d}\omega_{{\rm DS}} = \sum_{k\, \in\, \mathbb{Z}_{+}} \frac{k}{2^{k + 1}} = 1}$.
\flushright\end{example}

The following example heuristically demonstrates why $\digamma^{*}$-invariance of the concerned measure is insufficient to yield an analogue of Kac's theorem for the system of holomorphic correspondences.

\begin{example}
\label{ex:2}
Consider the holomorphic correspondence $\digamma_{\hspace{-4pt} P}$ given by the zero set of a polynomial in two variables, say $\displaystyle{P \left( z, w \right) = \sum_{k_{1}\, =\, 0}^{d} \sum_{k_{2}\, =\, 0}^{d} C_{\left( k_{1}, k_{2} \right)} z^{k_{1}} w^{k_{2}}}$. Such a polynomial correspondence is said to be a \emph{symmetric separable polynomial correspondence} (see \cite{bks:2026, bks:corr:2026, gst:2023}) if  
\begin{enumerate} 
\item No linear polynomial in $z$ or $w$ divides $P$, {\it i.e.}, there exists no point $a \in \mathbb{C}$ such that \\ $(z - a) | P \left( z, w \right)$ or $(w - a) | P \left( z, w \right)$; 
\item The equation $P \left( z, w \right) = 0$ can be alternatively written as $R_{1} (z) R_{2} (w) = 1$, where $R_{k}$ is a rational map of degree $d$ in $z$ and $w$ respectively for $k = 1, 2$. 
\item $P \left( z, w \right) = 0$ if and only if $P \left( w, z \right) = 0$, in other words $C_{\left( k_{1}, k_{2} \right)} = C_{\left( k_{2}, k_{1} \right)}$ for all $0 \le k_{1}, k_{2} \le d$. 
\end{enumerate} 

For any set $E \subseteq \widehat{\mathbb{C}}$, note that $\mathfrak{n}_{E} (z) \le 2$ and $\mathfrak{n}_{E}^{\dagger} (z) \le 2$. Suppose $\mu$ is an ergodic measure with respect to $\digamma_{\hspace{-4pt} P}$. Choose $E \subseteq \widehat{\mathbb{C}}$ such that $0 < \mu (E) \le \dfrac{1}{2}$. Then, we have $\displaystyle{\int_{E} \mathfrak{n}_{E} (z) \mathrm{d}\mu \le 1}$. On the other hand, if $E \subseteq \widehat{\mathbb{C}}$ satisfies $\dfrac{1}{2} < \mu (E) \le 1$ then $\displaystyle{\int_{E} \mathfrak{n}_{E} (z) \mathrm{d}\mu > 1}$. An analogous conclusion holds for $\mathfrak{n}_{E}^{\dagger} (z)$. As an explicit example of the above class of polynomial correspondence, one can consider $P(z, w) = z^{2} - w^{2}$. Then, for any $E \subseteq \widehat{\mathbb{C}}$, we have $\mathfrak{n}_{E} (z) = \mathfrak{n}_{E}^{\dagger} (z) = 1$ for every $z \in E$.
\end{example}

Note that the Examples \ref{ex:hinka} and \ref{ex:2} give classes of holomorphic correspondences and Borel sets $E$ for which the forward return time is uniformly bounded. More importantly, Examples \ref{ex:1} and \ref{ex:2} demonstrate that, unlike in the case of maps, ergodicity of the concerned measure with respect to the correspondence $\digamma$ imposes no restriction on the value of the integral $\displaystyle{\int_{E} \mathfrak{n}_{E} (z) \mathrm{d}\mu (z)}$. 

A major distinction in the phenomenon of recurrence between the dynamical systems of maps and that of holomorphic correspondences can be described as follows. Let $(X, \mathscr{B}, M, T)$ be an ergodic measure preserving system and $E \in \mathscr{B}$ such that $M(E) > 0$. Suppose $n_{E} (x) = 1$ for $M$-almost every $x \in E$. Readers can verify that $M \left( E\, \triangle\, T^{-1} (E) \right) = 0$. An application of Theorem 1.8 in \cite{walt:1982} then yields $M (E) \in \{ 0, 1 \}$. Consequently $M(E)=1$, which reduces the situation into a trivial case. In contrast, for systems arising from holomorphic correspondences, such a phenomenon does not necessarily occur. In particular, for a set $E$ satisfying $0<\mu(E)<1$, it is possible that $\mathfrak{n}_{E} (z) = 1$ for $\mu$-almost every $z \in E$, as demonstrated by the explicit construction in Example \ref{ex:2}.

\section{The induced correspondence} 
\label{sec:induced} 

Throughout this section, we assume that $\mu$ is a $\digamma^{*}$-invariant measure and that $E \subseteq \widehat{\mathbb{C}}$ is a Borel set of strict positive measure. As established in Section \ref{sec:results}, the analogue of Poincaré recurrence holds in this setting, making it natural to define the first return time as in Definition \ref{defn:recc}. This paves the way, as in the case of dynamics of maps, for the definition of an induced correspondence, $\digamma_{\hspace{-4pt} E}$ as written in Definition \ref{defn:indcorr}. If $\mathscr{B}$ denotes the Borel $\sigma$-algebra of $\widehat{\mathbb{C}}$, then we know that $\displaystyle{\mathscr{B}_{E} = \left\{ B \cap E : B \in \mathscr{B} \right\}}$ defines an induced Borel $\sigma$-algebra on $E$. Similarly, if $\mu$ is a measure supported on $\widehat{\mathbb{C}}$, then $\mu_{E}$, as given in Theorem \ref{thm:kacforc}, defines an induced measure on $E$. The set $E$ equipped with the Borel $\sigma$-algebra $\mathscr{B}_{E}$ and the induced measure $\mu_{E}$, along with the set-valued map $\digamma_{\hspace{-4pt} E}$ forms the induced system. 

In case of maps whenever $(X, \mathscr{B}, M, T)$ is a measure preserving system, the induced system given by $\left( E,\mathscr{B}_{E}, M_{E}, T_{E} \right)$ is also measure preserving. However, in the case of correspondences, this is not so. An induced correspondence does not naturally acquire a variety representation, thereby making it merely a set-valued map constructed from an underlying holomorphic correspondence. This also has a few immediate consequences. In particular, one cannot, in general, define the topological degree, nor can one establish an invariance of measures for the induced correspondence $\digamma_{\hspace{-4pt} E}$.

To study the dynamics of $\digamma_{\hspace{-4pt} E}$, it is necessary to introduce the notions of inverse and composition, therein. The adjoint of the set-valued map stemming from the induced correspondence $\digamma_{\hspace{-4pt} E}$, is given by $\digamma_{\hspace{-4pt} E}^{\dagger} (z) = \left\{ w \in E : z \in \digamma^{\mathfrak{n}_{E}(w)} (w) \right\}$, for $\mu$-almost every $z \in E$. And for a set $B \subseteq E$, 
\[ \digamma_{\hspace{-4pt} E}^{\dagger} (B)\ \ =\ \ \left\{ z \in E\; :\; \digamma^{\mathfrak{n}_{E} (z)} (z) \cap B \ne \emptyset \right\}. \] 
The iterates of the induced correspondence is defined inductively thus: For $k \in \mathbb{Z}^{+}$, we set
\[ \digamma_{\hspace{-4pt} E}^{k} (z)\ \ =\ \ \digamma^{\mathfrak{n}_{E} \left( z' \right)} \left( z' \right) \cap E,\ \ \text{where}\ \ z' \in \digamma_{\hspace{-4pt} E}^{k - 1} (z). \] 

A simple instance of an induced system arises under the setting described in the Example \ref{ex:hinka}. In this situation, the set-valued map $\digamma_{\hspace{-4pt} E}$ acts as $\displaystyle{\digamma_{\hspace{-4pt} E} (z) = \digamma(z) \cap E}$, for $\omega_{{\rm DS}}$-almost every $z \in \Omega_{{\rm DS}}$, since $\mathfrak{n}_{E} (z) = 1$ for every $z \in \Omega_{{\rm DS}}$. We now describe a non-trivial example that demonstrates the role of forward return times in shaping the course of induced dynamics, using the settings of Example \ref{ex:1}.

\begin{example}
Consider $\digamma_{\hspace{-4pt} S},\ \omega_{{\rm DS}}$ and $\Omega_{{\rm DS}}$ as given in the Example \ref{ex:hinka}. Let $E \subset \Omega_{{\rm DS}}$ be the annulus $E = \left\{ z \in \Omega_{{\rm DS}} : 2^{1/4} \le |z| \le 2^{3/4} \right\}$. As in Example \ref{ex:2}, we partition the set $E$ according to the first return time into $E_{k}$'s, as follows. 
\[ E_{k}\ \ =\ \ \bigg\{ z \in \widehat{\mathbb{C}} : 2^{\frac{2^{k + 1} - 2}{2^{k + 2}}} \le |z| \le 2^{\frac{2^{k + 1} - 1}{2^{k + 2}}} \bigg\}\, \bigsqcup\, \bigg\{ z \in \widehat{\mathbb{C}} : 2^{\frac{2^{k + 1} + 1}{2^{k + 2}}} \le |z| \le 2^{\frac{2^{k + 1} + 2}{2^{k + 2}}} \bigg\}. \] 
Then the action of $\digamma_{\hspace{-4pt} E}$ is given by $\digamma_{\hspace{-4pt} E}(z) = \digamma_{\hspace{-4pt} S}^{k} (z) \cap E$, where $z \in E_{k}$. 
\end{example}

With the above notations in mind, we now enlist a few results regarding some measure theoretic properties of the induced system, before embarking on the proof of Theorem \ref{thm:erg}. 
\begin{property} 
The image of a Borel set under the induced correspondence $\digamma_{\hspace{-4pt} E}$ is a Borel set, {\it i.e.}, if $B \in \mathscr{B}_{E}$, then $\digamma_{\hspace{-4pt} E} \left( B \right) \in \mathscr{B}_{E}$. 
\end{property} 

\begin{proof} 
Consider a Borel set $B \in \mathscr{B}_{E}$. Suppose we partition the set $E$ as described in Equation \eqref{eqn:3}. Then $\displaystyle{B = \bigcup_{k\, \in\, \mathbb{Z}_{+}} \left( B \cap E_{k} \right)}$. Thus, $\displaystyle{\digamma_{\hspace{-4pt} E} (B) = \bigcup_{k\, \in\, \mathbb{Z}_{+}} \left( \digamma^{k} \left( B \cap E_{k} \right) \cap E \right)}$. Since $E_{k}$ can be given alternatively by $\displaystyle{E_{k} =  \left( E \cap \left( \digamma^{\dagger} \right)^{k} (E) \right) \setminus \left( \bigcup_{1\, \le\, j\, \le\, k - 1} E_{j} \right)}$, it is clear that every $E_{k}$ is a Borel set, and thus, so are $\displaystyle{\digamma^{k} \left( B \cap E_{k} \right)}$ for any $k \in \mathbb{Z}_{+}$, thereby making $\digamma_{\hspace{-4pt} E} (B)$ a Borel set. 
\end{proof}

The next property concerns the relationship between the invariance of the induced measure $\mu_{E}$ and the almost invariance of the set $E$ with respect to $\left( \digamma, \mu \right)$. Note that the second part of the following proposition has been mentioned in Corollary 5.4 in \cite{lon:2024}.

\begin{proposition}
\label{prop:iminv}
Let $\digamma$ be a holomorphic correspondence defined on $\widehat{\mathbb{C}}$ and $\mu$ be a $\digamma^{*}$-invariant measure supported on $\widehat{\mathbb{C}}$. Let $E \subseteq \widehat{\mathbb{C}}$, such that $\mu(E)>0$. Then the induced measure $\mu_{E}$ is $\digamma^{*}$-invariant if and only if $E$ is almost invariant with respect to $\left( \digamma, \mu \right)$.
\end{proposition}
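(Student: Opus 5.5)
The plan is to translate $\digamma^{*}$-invariance of $\mu_{E}$ into a statement about the pull-back of indicator functions, and to compare it with the $\digamma^{*}$-invariance of $\mu$. By definition, $\mu_{E}$ is $\digamma^{*}$-invariant if and only if, for every Borel $B$,
\[ \mu(E\cap B) \ \ =\ \ \frac{1}{d_{{\rm top}}}\int_{E} \#\left(\digamma^{\dagger}(z)\cap B\right)\,\mathrm{d}\mu(z), \]
with preimages counted with multiplicity, or equivalently $\int_{E} f\,\mathrm{d}\mu = \int_{E} \mathcal{T}_{\digamma} f\,\mathrm{d}\mu$ for all $f\in\mathscr{L}^{1}(\widehat{\mathbb{C}},\mu)$. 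Invariance of $\mu$ gives the same identity with $E$ replaced by $\widehat{\mathbb{C}}$ on the right and $B$ on the left, that is, $\mu(B)=\frac{1}{d_{{\rm top}}}\int_{\widehat{\mathbb{C}}}\#(\digamma^{\dagger}(z)\cap B)\,\mathrm{d}\mu$. The whole argument then reduces to analysing the function $g(z)=\frac{1}{d_{{\rm top}}}\#(\digamma^{\dagger}(z)\cap E)=\mathcal{T}_{\digamma}\mathbf{1}_{E}(z)$, which takes values in $[0,1]$. By invariance it satisfies $\int g\,\mathrm{d}\mu=\mu(E)$.

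\emph{($\Leftarrow$)} Suppose $E$ is almost invariant, and let $E'\subseteq E$ be as in Definition \ref{defn:Gammaai}. Then for $z\in E'$ every preimage of $z$ lies in $E$, so $g=1$ on $E'$, hence $\mu$-a.e.\ on $E$. Since $0\le g\le 1$ and $\int g\,\mathrm{d}\mu=\mu(E)=\int_{E}g\,\mathrm{d}\mu$, we get $g=0$ $\mu$-a.e.\ on $E^{\rm c}$. In other words, $\mu$-almost every point outside $E$ has no preimage in $E$; this is the complement statement of Remark \ref{rmk:FGammak}, and here it is reproved directly. Now take any Borel $B$ and split $B=(B\cap E)\sqcup(B\cap E^{\rm c})$. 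Let $h_{B}(z)=\frac{1}{d_{{\rm top}}}\#(\digamma^{\dagger}(z)\cap B)$. For $z\in E'$ we have $h_{B\cap E^{\rm c}}(z)=0$, so
\[ \int_{E}h_{B}\,\mathrm{d}\mu=\int_{E}h_{B\cap E}\,\mathrm{d}\mu=\int_{\widehat{\mathbb{C}}}h_{B\cap E}\,\mathrm{d}\mu-\int_{E^{\rm c}}h_{B\cap E}\,\mathrm{d}\mu=\mu(B\cap E), \]
because $h_{B\cap E}\le g=0$ a.e.\ on $E^{\rm c}$. Dividing by $\mu(E)$ gives the invariance of $\mu_{E}$.

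\emph{($\Rightarrow$)} Suppose $\mu_{E}$ is invariant and apply the identity with $B=E^{\rm c}$. This gives $0=\mu(E\cap E^{\rm c})=\frac{1}{d_{{\rm top}}}\int_{E}\#(\digamma^{\dagger}(z)\cap E^{\rm c})\,\mathrm{d}\mu$. Hence the set $E'=\{z\in E:\digamma^{\dagger}(z)\subseteq E\}$ has full $\mu$-measure in $E$, and $\digamma^{\dagger}(E')\subseteq E$ by construction. The main point to check is that $E'$ is Borel. I expect this to be the only genuine obstacle, and it is resolved by writing $E'=E\setminus\digamma(E^{\rm c})$ and using the fact, quoted from \cite{lon:2024}, that images of Borel sets under $\digamma$ are Borel.

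A minor technical matter concerns counting with multiplicity versus set membership in $\#(\digamma^{\dagger}(z)\cap B)$. Only positivity and zero-ness of these counts are used, so multiplicities play no role. The identity for indicator functions follows from the $\mathscr{L}^{1}$ extension of invariance noted after the definition of $\digamma^{*}$-invariance.
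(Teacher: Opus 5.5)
Your proof is correct and follows essentially the same route as the paper. For ($\Rightarrow$) you test the invariance of $\mu_E$ on $E^{\rm c}$, where the paper tests it on $\chi_E$, which is equivalent. For ($\Leftarrow$) you combine the invariance of $\mu$ with the almost invariance of $E$ and of $E^{\rm c}$, exactly as the paper does.

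The only differences are two details the paper leaves out. You derive the almost invariance of $E^{\rm c}$ directly from $\int \mathcal{T}_{\digamma}\chi_E\,\mathrm{d}\mu=\mu(E)$ instead of citing Remark \ref{rmk:FGammak}. You also verify that $E'=E\setminus\digamma(E^{\rm c})$ is Borel, which the paper leaves implicit.

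One small correction to your final remark: the step $g=1$ on $E'$ does use that preimages counted with multiplicity total $d_{\rm top}$ at every point, so multiplicities are not entirely irrelevant. The paper uses this same fact tacitly.
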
 

\begin{proof} 
Suppose $\mu_{E}$ is a $\digamma^{*}$-invariant measure, we have 
\begin{equation} 
\label{eqn:4} 
\frac{1}{d_{{\rm top}}} \int_{\widehat{\mathbb{C}}} \sum_{w\, \in\, \digamma^{\dagger} (z)} f(w) \mathrm{d}\mu_{E}(z)\ \ =\ \ \int_{\widehat{\mathbb{C}}} f(z) \mathrm{d}\mu_{E}(z)\ \ \ \ \forall f \in \mathcal{C} \left( \widehat{\mathbb{C}},\, \mathbb{R} \right). 
\end{equation} 

As mentioned in Section \ref{sec:prelims}, the above equality extends to $\mathscr{L}^{1}$-functions as well. Choosing $f \equiv \chi_{E}$, in Equation \eqref{eqn:4}, we get 
\[ \frac{1}{d_{{\rm top}}}\int_{\widehat{\mathbb{C}}} \sum_{w\, \in\, \digamma^{\dagger} (z)} \chi_{E} (w) \mathrm{d}\mu_{E} (z)\ \ =\ \ \int_{\widehat{\mathbb{C}}} \chi_{E} (z) \mathrm{d}\mu_{E}(z). \] 
The right hand side of the above equation is equal to $1$, while the left hand side, upon simplification, equals $\displaystyle{\frac{1}{\mu(E)} \frac{1}{d_{{\rm top}}}\int_{E} \sum_{w\, \in\, \digamma^{\dagger} (z)} \chi_{E} (w) \mathrm{d}\mu(z)}$. Consequently, we obtain 
\[ \bigintss_{E} \left( 1 - \frac{1}{d_{{\rm top}}} \sum_{w\, \in\, \digamma^{\dagger} (z)} \chi_{E} (w) \right) \mathrm{d}\mu (z)\ \ =\ \ 0. \] 
Since the integrand is a non-negative $\mathscr{L}^{1}$-function, we obtain $\displaystyle{\chi_{E} (z) = \frac{1}{d_{{\rm top}}} \sum_{w\, \in\, \digamma^{\dagger} (z)} \chi_{E} (w)}$ for $\mu$-almost every $z \in E$. This proves the almost invariance of the set $E$ with respect to $\left( \digamma, \mu \right)$.

In order to prove that $\mu_{E}$ is $\digamma^{*}$-invariant, we need to show that Equation \eqref{eqn:4} holds for every function $f \in \mathcal{C} \left( \widehat{\mathbb{C}}, \mathbb{R} \right)$. Towards that end, consider an arbitrary function $f \in \mathcal{C} \left( \widehat{\mathbb{C}}, \mathbb{R} \right)$ and note that 
\[ \int_{\widehat{\mathbb{C}}} \sum_{w\, \in\, \digamma^{\dagger} (z)} f(w) \mathrm{d}\mu_{E}(z)\ \ =\ \ \frac{1}{\mu(E)} \int_{E} \sum_{w\, \in\, \digamma^{\dagger} (z)} f(w) \mathrm{d}\mu(z). \] 
Further, making use of $\digamma^{*}$-invariance of $\mu$, the almost invariance of the set $E$ with respect to $\left( \digamma, \mu \right)$, and thereby the almost invariance of the set $E^{{\rm c}}$ with respect to $\left( \digamma, \mu \right)$ (see Remark \ref{rmk:FGammak}), we have
\[ \int_{\widehat{\mathbb{C}}} f(z) \mathrm{d}\mu_{E}(z)\ \ =\ \ \frac{1}{\mu(E)} \int_{\widehat{\mathbb{C}}} \left( f \cdot \chi_{E} \right) (z) \mathrm{d}\mu(z)\ \ =\ \ \frac{1}{d_{{\rm top}}} \frac{1}{\mu(E)} \int_{E} \sum_{w\, \in\, \digamma^{\dagger} (z)} \left( f \cdot \chi_{E} \right) (w) \mathrm{d}\mu(z). \] 

Equating the two expressions above establishes Equation \eqref{eqn:4}. Since this identity holds for an arbitrary $f \in \mathcal{C}\left(\widehat{\mathbb{C}},\, \mathbb{R}\right)$, the proof of the proposition is complete.

\end{proof} 

\subsection{Proof of Theorem \ref{thm:erg}}

First, we prove a lemma which discusses the almost invariance of subsets of $\widehat{\mathbb{C}}$ with respect to $\left( \digamma, \mu \right)$ and $\left( \digamma_{\hspace{-4pt} E}, \mu_{E} \right)$. The definition of the former is as written in Definition \ref{defn:Gammaai}, while the definition of the latter is as written below. 

\begin{definition} 
\label{defn:Lambdaai} 
Let $\digamma_{\hspace{-4pt} E}$ be the induced correspondence, as written in Definition \ref{defn:indcorr} and $\mu_{E}$ be the induced measure, as written in Theorem \ref{thm:kacforc}. We call a set $B \subseteq E$ to be \emph{almost invariant with respect to $\left( \digamma_{\hspace{-4pt} E}, \mu_{E} \right)$} if there exists a subset $B' \subseteq B$ such that $\digamma_{\hspace{-4pt} E}^{\dagger} (B') \subseteq B$ and $\mu_{E} (B') = \mu_{E} (B)$.
\end{definition} 

\begin{lemma}
\label{prop:usedintheorem} 
Suppose $B \subseteq \widehat{\mathbb{C}}$ is almost invariant with respect to $\left( \digamma, \mu \right)$, then $B \cap E$ is almost invariant with respect to $\left( \digamma_{\hspace{-4pt} E}, \mu_{E} \right)$. If $B \subseteq E$ is almost invariant with respect to $\left( \digamma_{\hspace{-4pt} E}, \mu_{E} \right)$, then $B$ is almost invariant with respect to $\left( \digamma, \mu \right)$. 
\end{lemma}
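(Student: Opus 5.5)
The plan is to treat the two assertions separately. In both, I will work with the first return time $\mathfrak{n}_{E}$ of Definition~\ref{defn:recc}, which by Theorem~\ref{thm:poinc} is finite on a set $E_{0}\subseteq E$ with $\mu(E_{0})=\mu(E)$. I will also use the explicit description
\[ \digamma_{\hspace{-4pt} E}^{\dagger}(C)\;=\;\bigl\{ z\in E : \digamma^{\mathfrak{n}_{E}(z)}(z)\cap C\neq\emptyset \bigr\}, \qquad C \subseteq E. \]

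\emph{First assertion.} Let $B$ be almost invariant with respect to $(\digamma,\mu)$. By Remark~\ref{rmk:FGammak}(1), for each $k\in\mathbb{Z}_{+}$ there is a Borel $B_{k}'\subseteq B$ with $\mu(B_{k}')=\mu(B)$ and $(\digamma^{k})^{\dagger}(B_{k}')\subseteq B$. I will set
\[ B'' \;=\; E_{0}\cap \bigcap_{k\ge 1} B_{k}'. \]
This is a countable intersection of full-measure subsets, so $\mu(B'')=\mu(B\cap E)$, and hence $\mu_{E}(B'')=\mu_{E}(B\cap E)$. Now take $z\in\digamma_{\hspace{-4pt} E}^{\dagger}(B'')$ and put $k=\mathfrak{n}_{E}(z)$. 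Then $z\in E$ and $\digamma^{k}(z)\cap B_{k}'\neq\emptyset$, so $z\in(\digamma^{k})^{\dagger}(B_{k}')\subseteq B$. Thus $z\in B\cap E$, which proves the first assertion. The one point needing care is that $k$ depends on $z$; this is exactly why I intersect over all $k$ rather than fixing one.

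\emph{Second assertion.} Let $B\subseteq E$ be almost invariant with respect to $(\digamma_{\hspace{-4pt} E},\mu_{E})$, with witness $B'$. I will aim to produce $B''\subseteq B'$ of full measure in $B$ with $\digamma^{\dagger}(B'')\subseteq B$. Fix $z\in B'$ and $w\in\digamma^{\dagger}(z)$, and split into two cases.

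\emph{Case $w\in E$.} Here $z\in\digamma(w)\cap E$, so $\mathfrak{n}_{E}(w)=1$. Therefore $w\in\digamma_{\hspace{-4pt} E}^{\dagger}(B')\subseteq B$, as required.

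\emph{Case $w\notin E$.} This is the main obstacle, since then $w$ cannot lie in $B\subseteq E$. So I must show that the set
\[ Z \;=\; \bigl\{ z\in B' : \digamma^{\dagger}(z)\not\subseteq E \bigr\} \]
is $\mu$-null, and then take $B''=B'\setminus Z$.

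My first attempt at this uses the $\digamma^{*}$-invariance of $\mu$ in the form $\int \mathcal{T}_{\digamma}\chi\,\mathrm{d}\mu=\int \chi\,\mathrm{d}\mu$, where $\mathcal{T}_{\digamma}$ is the Koopman operator of Equation~\eqref{eqn:koopman}. Apply it to $\chi_{B}$ and to $\chi_{E\setminus B}$. I will then compare the $\digamma^{\dagger}$-preimage mass of $B'$ with $\mu(B)$, aiming to show that preimages of $B'$ lying outside $E$ carry zero weight. The second ingredient is that Case~1 forces preimages of $B'$ lying in $E$ to lie in $B$. Combining these should reduce matters to a statement of the form $\mathcal{T}_{\digamma}\chi_{B}=\chi_{B}$ $\mu$-a.e. on $B$. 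That statement is exactly the almost-invariance criterion extracted in the proof of Proposition~\ref{prop:iminv}.

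If this measure count does not close, the fallback is to build the bad set into the construction. Specifically, I would replace $B$ by its saturation along $\digamma^{\dagger}$-excursions outside $E$, namely the points whose backward orbit leaves $E$ and first re-enters $E$ in $B$. I would show that this saturation agrees with $B$ up to a null set, using Theorem~\ref{thm:poinc} applied to $\digamma^{\dagger}$ via Remark~\ref{defn:returntymback}. I should flag that, without such a reduction, I am not confident the second assertion holds exactly as stated: Case~2 points are a genuine obstruction, and closing it will require some additional input beyond the Case~1 argument.
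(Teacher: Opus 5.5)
Your proof of the first assertion is correct and is essentially the paper's. Intersecting the full-measure witnesses for $(\digamma^{k},\mu)$ over all $k$ (the paper's set $B_{*}$, built from Londhe's decreasing sequence) is exactly what absorbs the $z$-dependence of $\mathfrak{n}_{E}(z)$. Your Case~1 for the second assertion is also the paper's entire argument there. The paper asserts that every $z \in \digamma^{\dagger}(B')$ has $\mathfrak{n}_{E}(z)=1$, which tacitly assumes $\digamma^{\dagger}(B') \subseteq E$, i.e.\ that your Case~2 never occurs.

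Your suspicion about Case~2 is correct, and it is not a technicality: the second assertion is false as stated, so neither your measure count nor the saturation can close it.

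\begin{itemize}
\item[(a)] \emph{$B=E$ is always almost invariant for the induced system.} Since $\digamma_{\hspace{-4pt} E}^{\dagger}(C) \subseteq E$ for every $C$, the set $B=E$ is almost invariant with respect to $(\digamma_{\hspace{-4pt} E},\mu_{E})$, with witness $E$ itself.
\item[(b)] \emph{A concrete counterexample.} Take $\Gamma=\{(z,w): w=z^{2}\}$, so $d_{{\rm top}}=2$. Let $\mu$ be normalised arc length on the unit circle; it is $\digamma^{*}$-invariant, charges no polar set, and is ergodic for $\digamma$. Let $E=\{e^{2\pi i\theta}: 0\le\theta<1/2\}$, so $\mu(E)=1/2$. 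Each $e^{2\pi i\theta}\in E$ has the square root $e^{\pi i(\theta+1)}\notin E$. Hence $E'\subseteq E$ and $\digamma^{\dagger}(E')\subseteq E$ force $E'=\emptyset$, and $E$ is not almost invariant with respect to $(\digamma,\mu)$.
\item[(c)] \emph{Why your measure count fails.} In your notation $Z=B'$ here, and $\mathcal{T}_{\digamma}\chi_{E}\equiv\tfrac12$ $\mu$-a.e. So no use of $\int\mathcal{T}_{\digamma}\chi\,\mathrm{d}\mu=\int\chi\,\mathrm{d}\mu$ can localise to $\mathcal{T}_{\digamma}\chi_{B}=\chi_{B}$ on $B$. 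The induced hypothesis constrains only preimages lying in $E$ and says nothing about those outside.
\item[(d)] \emph{Why the saturation fails.} It replaces $B$ by a different set (here not a.e.\ equal to $E$), so it proves nothing about $B$ itself.
\end{itemize}

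More generally, for any ergodic $\mu$ and any Borel $E$ with $0<\mu(E)<1$, the choice $B=E$ is a counterexample, since ergodicity would otherwise force $\mu(E)\in\{0,1\}$.

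What is true is the version in which $E$ itself is assumed almost invariant with respect to $(\digamma,\mu)$, say with witness $E'$. Then $B'\cap E'$ witnesses the almost invariance of $B$: Case~2 is excluded and your Case~1 finishes. Since the paper's proof of Theorem~\ref{thm:erg} invokes the second half of this lemma, that proof does not go through as written either.
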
 

\begin{proof} 
Let $B \subseteq \widehat{\mathbb{C}}$ be an almost invariant set with respect to $\left( \digamma, \mu \right)$. Then, by Remark \ref{rmk:FGammak}, we know that $B$ is an almost invariant set with respect to $\left( \digamma^{k}, \mu \right)$ for every $k \in \mathbb{Z}_{+}$. Thus, applying the construction due to Londhe (refer Lemma 3.1 in \cite{lon:2024}), we obtain a decreasing sequence of Borel sets $B_{k} \subset B$ such that $\displaystyle{\left( \digamma^{\dagger}\right)^{k} \left( B_{k} \right) \subseteq B}$ with $\mu \left( B_{k} \right) = \mu (B)$ for every $k \in \mathbb{Z}_{+}$. Define $\displaystyle{B_{*} = \bigcap_{k\, \in\, \mathbb{Z}_{+}} B_{k}}$. Then, for every $k \in \mathbb{Z}_{+}$, we have $\displaystyle{\left( \digamma^{\dagger}\right)^{k} \left( B_{*} \right) \subseteq B}$ and $\displaystyle{\mu \left( B_{*} \right) = \mu (B)}$.

To prove $B \cap E$ is almost invariant with respect to $\left( \digamma_{\hspace{-4pt} E}, \mu_{E} \right)$, we consider the subset $B_{*} \cap E \subseteq B \cap E$ and prove that $\digamma_{\hspace{-4pt} E}^{\dagger} \left( B_{*} \cap E \right) \subseteq B \cap E$ and $\mu \left( B_{*} \cap E \right) = \mu \left( B \cap E \right)$. By the definition of the set $B_{*}$, we have $\mu \left( B_{*} \cap E \right) \le \mu \left( B \cap E \right)$. Further, since $B_{*} \cup E \subseteq B \cup E$, we also have that $\mu \left( B_{*} \cap E \right) \ge \mu \left( B \cap E \right)$. 

Now making use of the sequence of sets $E_{k}$'s as defined in Equation \eqref{eqn:3} we have 
\begin{eqnarray} 
\digamma_{\hspace{-4pt} E}^{\dagger} \left( B_{*} \cap E \right) & = & \left\{ z \in \bigcup_{k\, \in\, \mathbb{Z}_{+}} E_{k} : \digamma^{\mathfrak{n}_{E} (z)} (z) \cap \left( B_{*} \cap E \right) \ne \emptyset \right\} \nonumber \\ 
\label{eqn:5} 
& = & \bigcup_{k\, \in\, \mathbb{Z}_{+}} \left( E_{k} \cap \left( \digamma^{\dagger} \right)^{k} \left( B_{*} \cap E \right) \right)\\   
&\subseteq & \bigcup_{k\, \in\, \mathbb{Z}_{+}} \left[ \left( E_{k} \cap \left( \digamma^{\dagger} \right)^{k} \left( B_{*} \right) \right) \cap \left( E_{k} \cap \left( \digamma^{\dagger} \right)^{k} \left( E \right) \right) \right]\nonumber \\ 
& \subseteq & \bigcup_{k\, \in\, \mathbb{Z}_{+}} \left[ \left( E_{k} \cap \left( \digamma^{\dagger} \right)^{k} \left( B_{*} \right) \right) \right] \cap E \nonumber \\ 
& \subseteq & B \cap E \nonumber. 
\end{eqnarray}

The proof of the second part of the Lemma \ref{prop:usedintheorem} is as follows. Since $B \subseteq E$ is almost invariant with respect to $\left( \digamma_{\hspace{-4pt} E}, \mu_{E} \right)$, we have by Definition \ref{defn:Lambdaai} that there exists a subset $B' \subseteq B$ such that $\digamma_{\hspace{-4pt} E}^{\dagger} (B') \subseteq B$ and $\mu_{E} (B') = \mu_{E} (B)$. We now prove that the same subset $B'$ helps us achieve the almost invariance of the set $B$, with respect to $\left( \digamma, \mu \right)$. Towards that end, note that similar to the Equation \eqref{eqn:5}, we have $\displaystyle{\digamma_{\hspace{-4pt} E}^{\dagger} (B') = \bigcup_{k\, \in\, \mathbb{Z}_{+}} \left( E_{k} \cap \left( \digamma^{\dagger} \right)^{k} (B') \right)}$. Now suppose $z \in \digamma^{\dagger} (B')$, then $\mathfrak{n}_{E} (z) = 1$, {\it i.e.}, $z \in E_{1}$. Thus, $\digamma^{\dagger} (B') \subseteq \digamma_{\hspace{-4pt} E}^{\dagger} (B') \subseteq B$. Also, $\mu_{E} \left( B' \right) = \mu_{E} \left( B \right)$ implies $\mu \left( B' \right) = \mu \left( B \right)$, since $B' \subseteq B \subseteq E$. 
\end{proof} 

Now, we are ready to prove Theorem \ref{thm:erg}, where we show that the induced measure $\mu_{E}$ is ergodic with respect to $\digamma_{\hspace{-4pt} E}$, assuming the ergodicity of $\mu$ with respect to $\digamma$.

\begin{proof}[of Theorem \ref{thm:erg}] 
Consider an almost invariant set $B \in \displaystyle{\mathscr{B}_{E}}$ with respect to $\left( \digamma_{\hspace{-4pt} E}, \mu_{E} \right)$. Then, by Lemma \ref{prop:usedintheorem}, the set $B$ is almost invariant with respect to $\left( \digamma, \mu \right)$. Making use of the hypothesis that $\mu$ is ergodic with respect to the holomorphic correspondence $\digamma$, we obtain $\mu (B) = 0$ or $1$. This implies $\mu_{E} (B) = 0$ or $1$, proving the ergodicity of the induced measure $\mu_{E}$ with respect to the induced correspondence $\digamma_{\hspace{-4pt} E}$. 
\end{proof} 

\section{Proofs of theorems on ergodicity}
\label{sec:erg} 

Connoisseurs of dynamical systems may know that the term ergodic is used to describe a dynamical system which, in a broad sense, exhibits equivalence between local time averages and the global space average; that is, the long-term behaviour of almost every orbit reflects the statistical properties of the entire space of states. In this section, which is independent of the preceding parts of the article, we undertake a detailed analysis of ergodicity in the context of holomorphic correspondences. 

A natural question arises when considering the two distinct notions of ergodicity - one associated with the $\sigma^{\Gamma}$ map and the other with the holomorphic correspondence $\digamma$: does ergodicity with respect to one imply the ergodicity with respect to the other? Before presenting the statement that answers this question, we recall a theorem from Walters \cite{walt:1982}, which will be useful in establishing an answer.

\begin{theorem} \cite{walt:1982}
\label{thm:walt_6.14}
Suppose $X$ is a compact metric space.  Let $T: X \longrightarrow X$ be a continuous map and $M$ be a $T$-invariant probability measure supported on $X$. Then $T$ is ergodic with respect to $M$ if and only if 
\[ \frac{1}{n} \sum_{i\, =\, 0}^{n - 1} \delta_{T^{i} (x)}\ \ \to\ \ M,\ \ \ \ \text{for}\ M\text{-almost every point}\ x \in X. \] 
\end{theorem}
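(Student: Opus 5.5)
The statement to be proved is Theorem \ref{thm:walt_6.14}, quoted from Walters. It is the standard characterisation of ergodicity through the convergence of empirical measures. I would prove the two implications separately, using Birkhoff's ergodic theorem for the forward direction and the ergodic decomposition, or a direct invariant-set argument, for the converse. Throughout I would use that $\mathcal{C}(X,\mathbb{R})$ is separable because $X$ is a compact metric space.

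\textbf{Forward direction.} Assume $M$ is ergodic for $T$. Fix a countable dense subset $\{f_j\}_{j \ge 1}$ of $\mathcal{C}(X,\mathbb{R})$. For each $j$, Birkhoff's ergodic theorem together with ergodicity gives a full-measure set $X_j$ on which
\[ \frac{1}{n}\sum_{i=0}^{n-1} f_j\left(T^{i}(x)\right) \to \int_X f_j \, \mathrm{d}M . \]
Set $X_\ast = \bigcap_j X_j$; it still has full measure. Take $x \in X_\ast$, an arbitrary $f \in \mathcal{C}(X,\mathbb{R})$ and $\epsilon > 0$. Choose $f_j$ with $\|f - f_j\|_\infty < \epsilon$. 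Both the ergodic averages and the integrals change by less than $\epsilon$ when $f$ replaces $f_j$, so a $3\epsilon$-argument gives
\[ \int f \, \mathrm{d}\left(\tfrac{1}{n}\sum_{i=0}^{n-1}\delta_{T^{i}(x)}\right) \to \int f \, \mathrm{d}M . \]
This is exactly weak$^\ast$ convergence at every $x \in X_\ast$.

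\textbf{Converse.} Assume the empirical measures converge to $M$ for $M$-a.e. $x$. Let $A$ be a Borel set with $T^{-1}A = A$. I would show that $M(A) \in \{0,1\}$.

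First, for every $f \in \mathcal{C}(X,\mathbb{R})$, the hypothesis gives
\[ \frac{1}{n}\sum_{i=0}^{n-1} f \circ T^{i} \to \int f \, \mathrm{d}M \quad M\text{-a.e.} \]
Multiply by $\chi_A$ and integrate. Dominated convergence applies since $|f| \le \|f\|_\infty$. Invariance of $A$ and of $M$ gives
\[ \int_A f \circ T^{i} \, \mathrm{d}M = \int_A f \, \mathrm{d}M \quad \text{for every } i , \]
so the left side is constantly $\int_A f \, \mathrm{d}M$. Passing to the limit yields
\[ \int_A f \, \mathrm{d}M = M(A)\int f \, \mathrm{d}M . \]

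Second, I would extend this identity from continuous $f$ to $f = \chi_A$. Continuous functions are dense in $\mathscr{L}^1(M)$, and both sides of the identity are continuous in the $\mathscr{L}^1(M)$ norm of $f$, so the identity holds for every $f \in \mathscr{L}^1(M)$. Taking $f = \chi_A$ gives $M(A) = M(A)^2$, hence $M(A) \in \{0,1\}$. Therefore $T$ is ergodic with respect to $M$.

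\textbf{Main obstacle.} The only delicate point is the forward direction: Birkhoff's theorem gives a full-measure set that depends on the test function, and one needs a single full-measure set that works for all of them simultaneously. Separability of $\mathcal{C}(X,\mathbb{R})$ resolves this, via the countable intersection $X_\ast$ and the uniform approximation argument above. The approximation step in the converse is routine by comparison.
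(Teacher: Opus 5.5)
The paper gives no proof of this statement; it is quoted from Walters and used only as a black box in the proof of Proposition \ref{prop:erg}. Your proof is correct and self-contained.

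Your forward direction is the standard textbook argument: Birkhoff's theorem on a countable dense subset of $\mathcal{C}(X,\mathbb{R})$, one common full-measure set $X_\ast$, then uniform approximation.

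For the converse, a common textbook route integrates the averages against a second function and then appeals to the Ces\`aro criterion $\frac{1}{n}\sum_{i=0}^{n-1} M(T^{-i}A\cap B)\to M(A)M(B)$. You instead test directly against a strictly invariant set $A$. There $\chi_A\circ T^{i}=\chi_A$ and the invariance of $M$ make the integrated averages constant, so $\int_A f\,\mathrm{d}M = M(A)\int f\,\mathrm{d}M$ follows at once, and $f=\chi_A$ gives $M(A)=M(A)^2$. This avoids the Ces\`aro criterion.

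It is also the same mechanism the paper itself uses in the proof of Proposition \ref{prop:erg}. There the averages are integrated against $g\circ\Pi_{0}$, and the choice $f=\chi_{E}$, $g=\chi_{E^{{\rm c}}}$ yields $\mu(E)\,\mu(E^{{\rm c}})=0$.

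One small optional simplification: the $\mathscr{L}^{1}$-density step can be replaced by uniqueness in the Riesz representation theorem. Your identity says that the finite measures $M(A\cap\,\cdot\,)$ and $M(A)\,M$ agree on continuous functions, hence they coincide.
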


\begin{proposition}
\label{prop:erg}
Let $\digamma$ be a holomorphic correspondence defined on $\widehat{\mathbb{C}}$ and $\mu \in \mathscr{E}^{\Gamma}$ be a $\digamma^{*}$-invariant measure supported on $\widehat{\mathbb{C}}$. Then $\mu$ is ergodic with respect to $\digamma$.   
\end{proposition}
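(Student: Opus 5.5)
Let $E \subseteq \widehat{\mathbb{C}}$ be almost invariant with respect to $(\digamma, \mu)$, with $E' \subseteq E$, $\mu(E') = \mu(E)$ and $\digamma^{\dagger}(E') \subseteq E$. I will show that $\mu(E) \in \{0,1\}$ by lifting to the orbit space. Write $\mu = (\Pi_{0})_{*}\mathcal{M}$ with $\mathcal{M}$ ergodic for $\sigma^{\Gamma}$. By the remark preceding this subsection, $\mathcal{M}$ is also ergodic for $(\sigma^{\Gamma})^{-1}$, and this is the form I will use, since almost invariance is phrased through $\digamma^{\dagger}$, i.e.\ backward steps along paths.

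Set $\widetilde{E} = \Pi_{0}^{-1}(E)$, so that $\mathcal{M}(\widetilde{E}) = \mu(E)$. The key step is to show that $(\sigma^{\Gamma})^{-1}(\widetilde{E}) \supseteq \widetilde{E}$ up to an $\mathcal{M}$-null set. Take a path $\mathfrak{X} \in \Pi_{0}^{-1}(E')$. Its zeroth coordinate lies in $E'$, and $(\sigma^{\Gamma})^{-1}\mathfrak{X}$ has zeroth coordinate $z^{(-1)}_{k_{1}} \in \digamma^{\dagger}(z_{0}) \subseteq \digamma^{\dagger}(E') \subseteq E$. Hence
\[ (\sigma^{\Gamma})^{-1}\left(\Pi_{0}^{-1}(E')\right) \subseteq \Pi_{0}^{-1}(E) = \widetilde{E}. \]
Because $\mathcal{M}\left(\Pi_{0}^{-1}(E \setminus E')\right) = \mu(E\setminus E') = 0$, we obtain $\mathcal{M}\left((\sigma^{\Gamma})^{-1}\widetilde{E} \setminus \widetilde{E}\right) = 0$. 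Measure preservation of the invertible map $\sigma^{\Gamma}$ gives $\mathcal{M}\left((\sigma^{\Gamma})^{-1}\widetilde{E}\right) = \mathcal{M}(\widetilde{E})$. So the two sets differ by a null set, i.e.\ $\mathcal{M}\left(\widetilde{E}\,\triangle\,(\sigma^{\Gamma})^{-1}\widetilde{E}\right) = 0$.

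Ergodicity of $\mathcal{M}$ (for instance via Theorem 1.8 in \cite{walt:1982}, cited earlier for exactly this kind of conclusion) then gives $\mathcal{M}(\widetilde{E}) \in \{0,1\}$, hence $\mu(E) \in \{0,1\}$. Alternatively, one could avoid the symmetric-difference form: for $\mathcal{M}$-a.e.\ path starting in $\widetilde{E}$, every backward shift stays in $\widetilde{E}$, so Birkhoff averages of $\chi_{\widetilde{E}}$ along $(\sigma^{\Gamma})^{-1}$ equal $1$ on $\widetilde{E}$. Theorem \ref{thm:walt_6.14} forces these averages to equal $\mathcal{M}(\widetilde{E})$ almost everywhere, so $\mu(E) = 1$ whenever $\mu(E) > 0$.

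The only delicate point is the inclusion of paths. It requires that every path through a point of $E'$ has its $(-1)$-coordinate among the preimages $\digamma^{\dagger}(z_{0})$, counting multiplicity and variety labels. This holds directly from the definition \eqref{eqn:di_gamma} of $\mathscr{O}^{\Gamma}$. The hypothesis that $\mu$ is $\digamma^{*}$-invariant is not actually needed beyond its consistency with $\mathcal{M}$.
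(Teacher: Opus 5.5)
Your proof is correct, and it takes a more direct route than the paper. The paper also passes to $(\sigma^{\Gamma})^{-1}$-ergodicity of $\mathcal{M}$, but then argues through Theorem \ref{thm:walt_6.14}:
\begin{itemize}
\item it integrates the backward Ces\`aro averages of $f\circ\Pi_{0}$ against $g\circ\Pi_{0}$ to obtain the correlation identity \eqref{eqn:6};
\item it extends this by density to $f=\chi_{E}$, $g=\chi_{E^{\rm c}}$;
\item it then asserts that, owing to almost invariance, those averages of $\chi_{E}\circ\Pi_{0}$ coincide $\mathcal{M}$-a.e.\ with $\chi_{E}\circ\Pi_{0}$, so the left side vanishes and $\mu(E)\,\mu(E^{\rm c})=0$.
\end{itemize}

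That last assertion is exactly what your key step proves. Almost invariance only gives the one-step inclusion $\digamma^{\dagger}(E')\subseteq E$. Your observation $(\sigma^{\Gamma})^{-1}\bigl(\Pi_{0}^{-1}(E')\bigr)\subseteq\widetilde{E}$, combined with measure preservation, yields $\mathcal{M}\bigl(\widetilde{E}\,\triangle\,(\sigma^{\Gamma})^{-1}\widetilde{E}\bigr)=0$. This gives invariance of $\chi_{\widetilde{E}}$ under all backward iterates, and at the same time the corresponding statement for paths starting in $E^{\rm c}$. In the paper's framework that would otherwise need Londhe's $B_{*}$ construction together with almost invariance of $E^{\rm c}$.

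Once you have invariance mod $0$, ergodicity of $\mathcal{M}$ finishes the proof immediately, so the empirical-measure and correlation machinery is unnecessary. Also, since $\sigma^{\Gamma}$ is a bijection, the image of $\widetilde{E}$ under $(\sigma^{\Gamma})^{-1}$ is its preimage under $\sigma^{\Gamma}$. So you do not even need the remark about ergodicity of the inverse; plain $\sigma^{\Gamma}$-ergodicity suffices.

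One small caveat, which affects only your alternative sketch: Theorem \ref{thm:walt_6.14} is a weak-* statement about continuous test functions. Pointwise convergence of the averages of the indicator $\chi_{\widetilde{E}}$ should therefore be taken from Birkhoff's ergodic theorem instead.

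Your closing remark about $\digamma^{*}$-invariance is also fine. That hypothesis is needed only so that Definition \ref{defn:Gammaerg} applies to $\mu$ at all.
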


\begin{proof} 
Suppose $\mu$ is a measure which satisfies the hypothesis of Proposition \ref{prop:erg}. Then, using the observation made in Section \ref{sec:kac_proof}, we can write $\mu = \left( \Pi_{0} \right)_{*} \mathcal{M}$ for some $\left( \sigma^{\Gamma} \right)^{-1}$-ergodic measure $\mathcal{M}$. Since the shift map $\left( \sigma^{\Gamma} \right)^{-1}$ is continuous on the compact metric space $\mathscr{O}^{\Gamma} \left( \widehat{\mathbb{C}} \right)$, by Theorem \ref{thm:walt_6.14} we have 
\[ \frac{1}{n} \sum_{i\, =\, 0}^{n - 1} \delta_{\left( \left( \sigma^{\Gamma} \right)^{-1} \right)^{i} \left( \mathfrak{X} \left( z_{0}; \boldsymbol{\gamma} \right)_{\boldsymbol{l}} \right)}\ \ \to\ \ \mathcal{M}\ \ \ \ \forall\ \ \mathfrak{X} \left( z_{0}; \boldsymbol{\gamma} \right)_{\boldsymbol{l}}\ \in\ A\ \subseteq\ \mathscr{O}^{\Gamma} \left( \widehat{\mathbb{C}} \right), \] 
where $\mathcal{M} (A) = 1$. This, in turn yields 
\[ \left( \Pi_{0} \right)_{*} \left( \frac{1}{n} \sum_{i\, =\, 0}^{n - 1} \delta_{\left( \left( \sigma^{\Gamma} \right)^{-1} \right)^{i} \left( \mathfrak{X} \left( z_{0}; \boldsymbol{\gamma} \right)_{\boldsymbol{l}} \right)} \right)\ \ \to\ \ \mu\ \ \ \ \forall\ \ \mathfrak{X} \left( z_{0}; \boldsymbol{\gamma} \right)_{\boldsymbol{l}}\ \in\ A\ \subseteq\ \mathscr{O}^{\Gamma} \left( \widehat{\mathbb{C}} \right). \]
Note that $\mu \left( \Pi_{0} (A) \right) = 1$. Subsequently,
\[ \lim_{n\, \to\, \infty} \frac{1}{n} \sum_{i\, =\, 0}^{n - 1} \left( f \circ \Pi_{0} \right) \left( \left( \left( \sigma^{\Gamma} \right)^{-1} \right)^{i} \left( \mathfrak{X} \left( z_{0}; \boldsymbol{\gamma} \right)_{\boldsymbol{l}} \right) \right)\ \ =\ \ \int_{\mathscr{O}^{\Gamma} \left( \widehat{\mathbb{C}} \right)} \left( f \circ \Pi_{0} \right) \left( \mathfrak{X} \left( z_{0}; \boldsymbol{\gamma} \right)_{\boldsymbol{l}} \right) \mathrm{d} \mathcal{M}, \]
for all points $z_{0} \in \Pi_{0} (A) \subseteq \widehat{\mathbb{C}}$, and for every function $f \in \mathcal{C} \left( \widehat{\mathbb{C}}, \mathbb{R} \right)$. 

Let $g \in \mathscr{L}^{1} \left( \widehat{\mathbb{C}}, \mu \right)$ be a bounded function defined on $\widehat{\mathbb{C}}$. Multiplying both sides of the above equation by $g \circ \Pi_{0}$, integrating the expressions with respect to the $\left( \sigma^{\Gamma} \right)^{-1}$-ergodic measure $\mathcal{M}$ and applying the dominated convergence theorem, we get 
\begin{eqnarray}
\label{eqn:6}
& & \lim_{n\, \to\, \infty} \bigintss_{\mathscr{O}^{\Gamma} \left( \widehat{\mathbb{C}} \right)} \left( \frac{1}{n} \sum_{i\, =\, 0}^{n - 1} \left( f \circ \Pi_{0} \right) \left( \left( \left( \sigma^{\Gamma} \right)^{-1} \right)^{i} \left( \mathfrak{X} \left( z_{0}; \boldsymbol{\gamma} \right)_{\boldsymbol{l}} \right) \right) \right) \left( g \circ \Pi_{0} \right) \left( \mathfrak{X} \left( z_{0}; \boldsymbol{\gamma} \right)_{\boldsymbol{l}}\right) \mathrm{d} \mathcal{M} \nonumber \\ 
& = & \left( \int_{\mathscr{O}^{\Gamma} \left( \widehat{\mathbb{C}} \right)} \left( f \circ \Pi_{0} \right) \left( \mathfrak{X} \left( z_{0}; \boldsymbol{\gamma} \right)_{\boldsymbol{l}}\right) \mathrm{d} \mathcal{M} \right) \cdot \left( \int_{\mathscr{O}^{\Gamma} \left( \widehat{\mathbb{C}} \right)} \left( g \circ \Pi_{0} \right) \left( \mathfrak{X} \left( z_{0}; \boldsymbol{\gamma} \right)_{\boldsymbol{l}} \right) \mathrm{d} \mathcal{M} \right). 
\end{eqnarray}

Further, consider an almost invariant set $E \subseteq \widehat{\mathbb{C}}$ with respect to $(\digamma, \mu)$. Making use of the density of continuous functions in the space of $\mathscr{L}^{1} \left( \mathscr{O}^{\Gamma} \left( \widehat{\mathbb{C}} \right), \mathcal{M} \right)$-functions, we can choose $f = \chi_{E}$ and $g = \chi_{E^{c}}$ in Equation \eqref{eqn:6} which reduces the right hand side of Equation \eqref{eqn:6} to $\mu(E) \cdot \mu(E^{c})$. Owing to the almost invariance of $E$ with respect to $(\digamma, \mu)$, 
\[ \frac{1}{n} \sum_{i\, =\, 0}^{n - 1} \left( \chi_{E} \circ \Pi_{0} \right) \left( \left( \left( \sigma^{\Gamma} \right)^{-1} \right)^{i} \left( \mathfrak{X} \left( z_{0}; \boldsymbol{\gamma} \right)_{\boldsymbol{l}} \right) \right)\ \ =\ \ \left( \chi_{E} \circ \Pi_{0} \right) \left( \mathfrak{X} \left( z_{0}; \boldsymbol{\gamma} \right)_{\boldsymbol{l}} \right) \] 
for $\mathcal{M}$-almost every $\mathfrak{X} \left( z_{0}; \boldsymbol{\gamma} \right)_{\boldsymbol{l}} \in \mathscr{O}^{\Gamma} \left( \widehat{\mathbb{C}} \right)$. The left hand side of Equation \eqref{eqn:6} reduces to $0$, resulting in $\mu (E) \cdot \mu \left( E^{{\rm c}} \right) = 0$. Thus, $\mu(E) \in \{ 0, 1 \}$ and hence $\mu$ is ergodic with respect to $\digamma$. 
\flushright\end{proof} 

\subsection{Proof of Theorem \ref{thm:equierg}}
We begin this subsection with a result due to Londhe, that shall come in handy in the proof. 

\begin{lemma} 
\label{lem:londhe}
\cite{lon:2022}
Let $\digamma$ be a holomorphic correspondence defined on $\widehat{\mathbb{C}}$ and $\mu \in \mathscr{M} \left( \widehat{\mathbb{C}} \right)$ be a $\digamma^{*}$-invariant measure that does not put any mass on polar sets. Then, the following statements are true. 
\begin{enumerate} 
\item For any Borel set $E \subseteq \widehat{\mathbb{C}}$, we have $\mu (E) \le \mu \left( \digamma^{\dagger} (E) \right)$. 
\item $\mu$ is ergodic with respect to the holomorphic correspondence $\digamma$ if and only for any $f \in \mathscr{L}^{1} \left( \widehat{\mathbb{C}}, \mu \right)$ satisfying $\displaystyle{\mathcal{T}_{\digamma} (f) = f}$ for $\mu$-almost every $z \in \widehat{\mathbb{C}}$, we have $f$ to be a constant $\mu$-almost everywhere. 
\end{enumerate} 
\end{lemma}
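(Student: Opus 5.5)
The plan is to prove both parts directly from $\digamma^{*}$-invariance, written in the form $\int_{\widehat{\mathbb{C}}} \mathcal{T}_{\digamma} f\, \mathrm{d}\mu = \int_{\widehat{\mathbb{C}}} f\, \mathrm{d}\mu$ for $f \in \mathscr{L}^{1}(\widehat{\mathbb{C}}, \mu)$. The hypothesis that $\mu$ charges no polar set will be used only to discard the non-generic points. These form a finite set, hence a polar set, so $\mu$-almost every $z$ has exactly $d_{{\rm top}}$ pre-images counted with multiplicity. In particular $\mathcal{T}_{\digamma} 1 = 1$ holds $\mu$-almost everywhere.

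For part (1), I apply the invariance identity to $f = \chi_{\digamma^{\dagger}(E)}$, which is legitimate since $\digamma^{\dagger}(E)$ is Borel. Take a generic $z \in E$. Every $w \in \digamma^{\dagger}(z)$ lies in $\digamma^{\dagger}(E)$, so $\mathcal{T}_{\digamma}\chi_{\digamma^{\dagger}(E)}(z) = 1$. Hence $\mathcal{T}_{\digamma}\chi_{\digamma^{\dagger}(E)} \ge \chi_{E}$ holds $\mu$-almost everywhere. Integrating gives $\mu(\digamma^{\dagger}(E)) = \int \mathcal{T}_{\digamma}\chi_{\digamma^{\dagger}(E)}\,\mathrm{d}\mu \ge \mu(E)$.

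For part (2), the first step is a bridge between sets and functions: a Borel set $E$ is almost invariant with respect to $(\digamma, \mu)$ if and only if $\mathcal{T}_{\digamma}\chi_{E} = \chi_{E}$ holds $\mu$-almost everywhere.
\begin{itemize}
\item Suppose $E$ is almost invariant, witnessed by $E'$. For generic $z \in E'$ all pre-images lie in $E$, so $\mathcal{T}_{\digamma}\chi_{E} = 1$ on $E'$. Hence $\mathcal{T}_{\digamma}\chi_{E} \ge \chi_{E}$ almost everywhere. Both sides have the same integral by invariance, so equality holds almost everywhere.
\item Conversely, suppose $\mathcal{T}_{\digamma}\chi_{E} = \chi_{E}$ almost everywhere. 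Set $E' = \{ z \in E : z \text{ generic},\ \mathcal{T}_{\digamma}\chi_{E}(z) = 1\}$. This set is Borel, has $\mu(E') = \mu(E)$, and satisfies $\digamma^{\dagger}(E') \subseteq E$.
\end{itemize}
With this bridge, the direction ``invariant functions constant $\Rightarrow$ ergodic'' is immediate. Given an almost invariant $E$, the function $\chi_{E}$ is $\mathcal{T}_{\digamma}$-fixed, hence constant almost everywhere, so $\mu(E) \in \{0,1\}$.

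For ``ergodic $\Rightarrow$ invariant functions constant'', let $f \in \mathscr{L}^{1}$ satisfy $\mathcal{T}_{\digamma} f = f$. I will show that each level set $E_{c} = \{ f > c \}$ satisfies $\mathcal{T}_{\digamma}\chi_{E_{c}} = \chi_{E_{c}}$. Ergodicity then forces $\mu(E_{c}) \in \{0,1\}$ for every $c$, so $f$ is constant almost everywhere. The tool is the following principle: since $\mathcal{T}_{\digamma}$ is positive and preserves integrals, any $h \in \mathscr{L}^{1}$ with $\mathcal{T}_{\digamma} h \ge h$ almost everywhere satisfies $\mathcal{T}_{\digamma} h = h$ almost everywhere. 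I apply it three times.
\begin{enumerate}
\item Since $\mathcal{T}_{\digamma}$ fixes constants, $g = f - c$ is fixed. Then $\mathcal{T}_{\digamma} g^{+} \ge (\mathcal{T}_{\digamma} g)^{+} = g^{+}$, so $g^{+}$ is fixed.
\item For each $n$, put $h_{n} = \min(1, n g^{+})$. By positivity, $\mathcal{T}_{\digamma} h_{n} \ge h_{n}$; to see this, compare separately with $\mathcal{T}_{\digamma} 1 = 1$ and with $\mathcal{T}_{\digamma}(n g^{+}) = n g^{+}$. Hence each $h_{n}$ is fixed.
\item As $n \to \infty$, $h_{n} \uparrow \chi_{E_{c}}$. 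By dominated convergence in $\mathscr{L}^{1}$, together with the finite-sum form of $\mathcal{T}_{\digamma}$, which passes to pointwise limits, we get $\mathcal{T}_{\digamma}\chi_{E_{c}} = \chi_{E_{c}}$.
\end{enumerate}
The only delicate point I expect is this lattice and limit step: it needs the positivity and integral preservation of $\mathcal{T}_{\digamma}$, and it needs the genericity of $\mu$-almost every point, handled through the polar-set hypothesis, to make $\mathcal{T}_{\digamma} 1 = 1$.
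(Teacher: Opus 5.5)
The paper gives no proof of this lemma. It quotes it from Londhe and uses it as a black box in the implications $1\Rightarrow 2$, $1\Rightarrow 4$ and $1\Rightarrow 5$ of Theorem~\ref{thm:equierg}. So your argument is a self-contained replacement rather than a variant of anything in the paper, and it is essentially correct.

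It uses only three properties of $\mathcal{T}_{\digamma}$: it is positive, it preserves $\mu$-integrals, and it fixes constants. Two of your ingredients match what the paper uses elsewhere:
\begin{itemize}
\item Your ``bridge'' ($E$ almost invariant iff $\mathcal{T}_{\digamma}\chi_{E}=\chi_{E}$ a.e.) is exactly what the paper uses implicitly in its proof of $4\Rightarrow 1$.
\item Your principle ``$\mathcal{T}_{\digamma}h\ge h$ forces equality'' is the argument the paper gives for $1\Rightarrow 4$.
\end{itemize}
Your route also has a side benefit. If pre-images are counted with multiplicity, as in the definition of $\digamma^{*}$, then every $z$ (not just generic ones) has exactly $d_{{\rm top}}$ pre-images. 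Hence $\mathcal{T}_{\digamma}1\equiv 1$ everywhere, and your proof never actually needs the hypothesis that $\mu$ puts no mass on polar sets. This matters because Theorem~\ref{thm:equierg} invokes the lemma for an arbitrary $\digamma^{*}$-invariant $\mu$, without that hypothesis.

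There is one slip, in step 2 of the direction ``ergodic $\Rightarrow$ fixed functions constant''. Positivity applied to $h_{n}\le 1$ and $h_{n}\le ng^{+}$ gives only upper bounds: $\mathcal{T}_{\digamma}h_{n}\le \mathcal{T}_{\digamma}1=1$ and $\mathcal{T}_{\digamma}h_{n}\le n\,\mathcal{T}_{\digamma}g^{+}=ng^{+}$. Hence $\mathcal{T}_{\digamma}h_{n}\le h_{n}$, not $\ge$. In general positivity yields $\mathcal{T}_{\digamma}\max(a,b)\ge\max(\mathcal{T}_{\digamma}a,\mathcal{T}_{\digamma}b)$ but $\mathcal{T}_{\digamma}\min(a,b)\le\min(\mathcal{T}_{\digamma}a,\mathcal{T}_{\digamma}b)$. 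The conclusion still holds, and there are two easy repairs:
\begin{itemize}
\item Your principle works just as well with the reverse inequality: apply it to $-h_{n}$, or note that $\int (h_{n}-\mathcal{T}_{\digamma}h_{n})\,\mathrm{d}\mu=0$ with a nonnegative integrand.
\item Alternatively, write $h_{n}=1-(1-ng^{+})^{+}$ and reuse step 1 with the fixed function $1-ng^{+}$.
\end{itemize}

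Two small points are worth stating explicitly.
\begin{itemize}
\item $\mathcal{T}_{\digamma}$ is well defined on $\mu$-classes. If $\mu(N)=0$, then $\int\mathcal{T}_{\digamma}\chi_{N}\,\mathrm{d}\mu=\mu(N)=0$. Since $\mathcal{T}_{\digamma}\chi_{N}>0$ exactly on $\digamma(N)$, this forces $\mu(\digamma(N))=0$.
\item Dominated convergence is not needed in step 3. The finite sums defining $\mathcal{T}_{\digamma}h_{n}$ converge pointwise everywhere, so you only need to discard the countable union of the null sets where $\mathcal{T}_{\digamma}h_{n}\neq h_{n}$.
\end{itemize}
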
 

\begin{proof}[of Theorem \ref{thm:equierg}] We prove the statements in the theorem one-by-one. 
\medskip 

{\bf \underline{1 $\Longrightarrow$ 2}}: Let $\mu$ be ergodic with respect to the holomorphic correspondence $\digamma$. Let $E$ be a Borel set for which $\mu (E) > 0$. Define $\displaystyle{E_{\infty} = \bigcup_{n\, \ge\, 1} \left( \digamma^{\dagger} \right)^{n} (E)}$. Then, $E_{\infty}$ is a Borel set that satisfies $\displaystyle{\digamma^{\dagger} \left( E_{\infty} \right) \subseteq E_{\infty}}$, meaning $E_{\infty}$ is an almost invariant set with respect to $\left( \digamma, \mu \right)$. Hence, by Definition \ref{defn:Gammaai}, $\mu \left( E_{\infty} \right) = 0$ or $1$. An application of statement (1) of Lemma \ref{lem:londhe} along with the definition of the set $E_{\infty}$ then gives us $\mu \left( E_{\infty} \right) = 1$. 
\medskip 

{\bf \underline{2 $\Longrightarrow$ 3}}: Since $E'$ is a Borel set of strict positive measure, we have 
\[ 0\ \ <\ \ \mu \left( E' \right)\ \ =\ \ \mu \left( E' \cap E_{\infty} \right)\ \ =\ \ \mu \left( \bigcup_{n\, \ge\, 1} \left( E' \cap \left( \digamma^{\dagger} \right)^{n} (E) \right) \right), \] 
that, in turn assures of the existence of some $n \in \mathbb{Z}_{+}$ for which $\displaystyle{\mu \left( E' \cap \left( \digamma^{\dagger} \right)^{n} (E) \right) > 0}$. 
\medskip 

{\bf \underline{3 $\Longrightarrow$ 1}}: Assuming the statement in $(3)$, we need to prove that $\mu$ is ergodic with respect to the holomorphic correspondence $\digamma$. In order to do so, we assume the contrary and arrive at a contradiction. Towards that end, consider an almost invariant Borel set $B$ with respect to $\left( \digamma, \mu \right)$ for which $0 < \mu (B) < 1$. Consider the set $B_{*}$, as defined in the proof of Lemma \ref{prop:usedintheorem}. As mentioned earlier, for every $k \in \mathbb{Z}_{+}$, we have $\displaystyle{\left( \digamma^{\dagger}\right)^{k} \left( B_{*} \right) \subseteq B}$ and $\displaystyle{\mu \left( B_{*} \right) = \mu (B)}$. Thus, 
\[ \mu \left( \left( \digamma^{\dagger}\right)^{k} \left( B_{*} \right) \cap B^{{\rm c}} \right) = 0\ \ \ \ \text{for every}\ k \in \mathbb{Z}_{+}, \] 
providing a contradiction to the hypothesis in statement (3), since both the sets $B^{c}$ and $ \left( \digamma^{\dagger}\right)^{k} \left( B_{*} \right)$ are of strict positive measure. 
\medskip 

{\bf \underline{1 $\Longrightarrow$ 4}}: By hypothesis, $\mu$ is ergodic with respect to the holomorphic correspondence $\digamma$. Thus, for any $f \in \mathscr{L}^{1} \left(\widehat{\mathbb{C}}, \mu \right)$, we have 
\[ \int_{\widehat{\mathbb{C}}} f \mathrm{d}\mu\ \ =\ \ \int_{\widehat{\mathbb{C}}} \sum_{w\, \in\, \digamma^{\dagger} (z)} \frac{f(w)}{d_{{\rm top}}} \mathrm{d}\mu\ \ =\ \ \int_{\widehat{\mathbb{C}}} \mathcal{T}_{\digamma} (f) \mathrm{d}\mu, \] 
that implies $\displaystyle{\int_{\widehat{\mathbb{C}}} \left( \mathcal{T}_{\digamma} (f) - f \right) \mathrm{d}\mu = 0}$. This observation along with the hypothesis $\displaystyle{\mathcal{T}_{\digamma} (f) - f \ge 0}$, $\mu$-almost everywhere, gives $\mathcal{T}_{\digamma} (f) = f,\ \mu$ almost everywhere. By an application of the second statement in Lemma \ref{lem:londhe}, we have $f \equiv c,\ \mu$ almost everywhere, where $c$ is some constant. 
\medskip 

{\bf \underline{4 $\Longrightarrow$ 1}}: Let $B$ be a Borel set that is almost invariant with respect to $\left( \digamma, \mu \right)$. Consider the characteristic function $\chi_{B}$. Then, we have 
\[ \mathcal{T}_{\digamma} \left( \chi_{B} \right) (z)\ \ =\ \ \sum_{w\, \in\, \digamma^{\dagger} (z)} \frac{\chi_{B} (w)}{d_{{\rm top}}}\ \ =\ \ \begin{cases} 1 & \text{for}\ \mu\text{-almost every}\ z \in B, \\ & \vspace{-10pt} \\ 0 & \text{for}\ \mu\text{-almost every}\ z \in B^{{\rm c}}. \end{cases} \] 
However, our assumption in statement (4) implies $\mu (B) = 0$ or $1$, proving the ergodicity of $\mu$ with respect to $\digamma$. 
\medskip 

{\bf \underline{1 $\Longleftrightarrow$ 5}}: The proof of $1 \Longrightarrow 5$ follows directly by appealing to statement (2) in Lemma \ref{lem:londhe}, while the proof of $5 \Longrightarrow 1$ follows by repeating the arguments in the proof of $4 \Longrightarrow 1$, as written above, since $\chi_{B} \in \mathscr{L}^{p} \left( \widehat{\mathbb{C}}, \mu \right)$ for any $1 \le p < \infty$. Both these are left as an exercise to the readers. 
\end{proof} 

Intuitively, the equivalences between the statements (1), (2) and (3) of Theorem \ref{thm:equierg} imply that no subset of positive size remains isolated under the action of the dynamics. Any set with positive measure eventually spreads throughout the whole space and any two such sets will overlap after some iteration. In essence, the dynamics forces all the regions in the space to interact with each other, making the space behave like a single, inseparable whole.

\subsection{Proof of Theorem \ref{thm:dense}}

In this subsection, we prove Theorem \ref{thm:dense} which talks about the density of images of a point in $\widehat{\mathbb{C}}$, in the classical sense, with respect to a holomorphic correspondence $\digamma$.

\begin{proof}[Proof of Theorem \ref{thm:dense}]
To prove the theorem, it suffices to show that for $\mu$-almost every point in $\widehat{\mathbb{C}}$, the forward image set $\displaystyle{I (z) = \bigcup_{i\, \ge\, 0} \digamma^{i} (z)}$ is dense in $\widehat{\mathbb{C}}$. Let $\{ B_{n} \}_{n\, \ge\, 1}$ be a countable base for the topology of the compact metric space $\widehat{\mathbb{C}}$. A point $z \in \widehat{\mathbb{C}}$ has its forward images to be dense in $\widehat{\mathbb{C}}$ if and only if for every $n \in \mathbb{Z}^{+}$ there exists $i \ge 0$ such that  $\digamma^{i} (z) \cap B_{n} \ne \emptyset$. In other words, $\displaystyle{z \in \bigcap_{n\, \ge\, 1} \bigcup_{i\, \ge\, 0} \left( \digamma^{i} \right)^{\dagger} \left( B_{n} \right)}$. For each $n \in \mathbb{Z}^{+}$, define $\displaystyle{A_{n} = \bigcup_{i\, \ge\, 0} \left( \digamma^{i} \right)^{\dagger} \left( B_{n} \right)}$.  By construction, each $A_{n}$ is measurable and a straightforward verification shows that $A_{n}$'s are almost invariant with respect to $(\digamma, \mu)$, for every $n \in \mathbb{Z}^{+}$. By the ergodicity of the measure $\mu$ with respect to $\digamma$, it follows that $\mu \left( A_{n} \right) \in \{ 0, 1 \}$ for all $n \in \mathbb{Z}^{+}$. Since $\mu \left( B_{n} \right) > 0$ by hypothesis and $B_{n} \subseteq A_{n}$, we have $0 < \mu \left( B_{n} \right) \le \mu \left( A_{n} \right)$ which forces $\mu \left( A_{n} \right) = 1$ for all $n \in \mathbb{Z}^{+}$. Consequently, we have $\displaystyle{\mu \left( \bigcap_{n\, \ge\, 1} A_{n} \right) = 1}$ which concludes the proof.
\end{proof}

\subsection{Proof of Theorem \ref{thm:rkforc}}

Prior to presenting the proof of Theorem \ref{thm:rkforc}, we recall Kakutani-Rokhlin's lemma for maps, that plays a key role in our subsequent analysis.

\begin{lemma} \cite{ew:2011}
\label{lem:kak-rok}
Let $(X, \mathscr{B}, M, T)$ be an invertible ergodic measure-preserving system and assume that $\mathcal{M}$ is non-atomic. Then for any $n \geq 1$ and $\epsilon > 0$, there exists a set $B \in \mathscr{B}$ with the property that $B, T (B), \cdots, T^{n - 1} (B)$ are disjoint sets and $M \left( B \sqcup T (B) \sqcup \cdots \sqcup T^{n - 1} (B) \right) > 1 - \epsilon$.
\end{lemma}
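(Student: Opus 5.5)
The plan is the classical Kakutani skyscraper construction. First, non-atomicity of $M$ gives a Borel set $A$ with $0 < M(A) < \epsilon / n$: a non-atomic probability measure takes every value in $[0,1]$. Second, by the Poincar\'{e} recurrence theorem, $M$-almost every $x \in A$ has a finite first return time $n_{A}(x)$ as in Equation \eqref{eqn:1}. Third, ergodicity shows that $\bigcup_{k \ge 0} T^{k}(A)$ has full measure. Indeed, this union is $T$-invariant up to a null set, because $T$ is invertible and measure preserving, so its measure is $0$ or $1$; it contains $A$, so the measure is $1$.

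Next I partition $A$ by return time, setting $A_{k} = \{ x \in A : n_{A}(x) = k \}$ for $k \in \mathbb{Z}_{+}$. These sets are Borel, since $A_{k} = A \cap T^{-k}A \setminus \bigcup_{j < k} T^{-j}A$. I then form the tower levels $T^{j}(A_{k})$ for $0 \le j \le k-1$, which are Borel because $T$ is invertible. The key structural claim is that these levels are pairwise disjoint and that their union is $\bigcup_{k \ge 0} T^{k}(A)$ up to a null set, hence has full measure.

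\emph{Disjointness.} Suppose $T^{j}(A_{k}) \cap T^{j'}(A_{k'}) \ne \emptyset$ with $j \le j'$. Apply $T^{-j}$ and use injectivity. If $j < j'$, some point of $A_{k}$ lands in $A$ at time $j' - j$, where $0 < j' - j < k$; this contradicts minimality of the return time. If $j = j'$, then $k = k'$ because the $A_{k}$ are disjoint.

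\emph{Covering.} For almost every $y$, take the largest $m \ge 0$ with $T^{-m}y \in A$. Such an $m$ exists for almost every $y$: use backward recurrence for the invertible system together with the full-measure union above. If $T^{-m}y \in A_{k}$, then $m < k$, since otherwise $A$ would be hit at a time between $m$ and $0$, contradicting the choice of $m$.

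With the skyscraper in hand, define
\[ B\ \ =\ \ \bigcup_{k\, \ge\, n}\ \bigcup_{i\, =\, 0}^{\lfloor k/n \rfloor - 1} T^{in}(A_{k}). \]
Then $T^{r}(B)$ for $0 \le r \le n-1$ consists of the levels $T^{in+r}(A_{k})$. Each of these is a genuine level of column $k$, since $in + r \le k - 1$. Distinct pairs $(i, r)$ give distinct heights, so $B, T(B), \ldots, T^{n-1}(B)$ are pairwise disjoint by the disjointness of levels.

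The part of the skyscraper not covered consists of two kinds of levels. For each $k \ge n$, the top $k - n\lfloor k/n \rfloor < n$ levels of column $k$ are missed. For $k < n$, the whole column of fewer than $n$ levels is missed. Each level of column $k$ has measure $M(A_{k})$, so the uncovered measure is at most $n \sum_{k} M(A_{k}) = n M(A) < \epsilon$. This yields $M\left( B \sqcup T(B) \sqcup \cdots \sqcup T^{n-1}(B) \right) > 1 - \epsilon$.

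The main obstacle I expect is the covering step, showing that the skyscraper exhausts $X$ up to a null set. This is exactly where both invertibility (backward orbits) and ergodicity are used. I would handle it by first establishing, via invariance of $\bigcup_{k \in \mathbb{Z}} T^{k}A$ and ergodicity, that almost every point has infinitely many past visits to $A$. From this, the last visit before time $0$ is well defined, and the height bound $m < n_{A}(T^{-m}y)$ follows.
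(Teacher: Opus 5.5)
Your proposal is correct. It is the standard Kakutani skyscraper proof: a base $A$ with $0<M(A)<\epsilon/n$, columns $A_{k}$ sorted by first return time, $B$ built from every $n$-th level, and a residual of measure at most $nM(A)<\epsilon$. This is the argument of the cited Einsiedler--Ward text; the paper itself only quotes the lemma and gives no proof.

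There are two slips to fix. In the covering step you need the \emph{smallest} $m\ge 0$ with $T^{-m}y\in A$, i.e. the most recent visit. It exists as soon as $y\in\bigcup_{k\ge 0}T^{k}(A)$, so infinitely many past visits are not needed. In the disjointness step it is the point of $A_{k'}$ that returns to $A$, at time $0<j'-j\le j'<k'$; it is not a point of $A_{k}$ returning at a time less than $k$.
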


\begin{proof}[Proof of Theorem \ref{thm:rkforc}]
Consider the space of bi-infinite sequences $\mathscr{O}^{\Gamma} \left( \widehat{\mathbb{C}} \right)$, associated with the holomophic correspondence $\Gamma$, as defined in Equation \eqref{eqn:di_gamma}, endowed with the Borel $\sigma$-algebra $\mathscr{B}$. Let $\mathcal{M}$ be a non-atomic, ergodic measure supported on $\mathscr{O}^{\Gamma} \left( \widehat{\mathbb{C}} \right)$. Then by Lemma \ref{lem:kak-rok} for any $n \geq 1$ and $\epsilon > 0$, there exists a measurable set $B \subseteq \mathscr{O}^{\Gamma} \left( \widehat{\mathbb{C}} \right)$ such that the sets $B, \sigma^{\Gamma} (B), \cdots, \left( \sigma^{\Gamma} \right)^{n - 1} (B)$ are pairwise disjoint and satisfy 
\[ \mathcal{M} \left( B \sqcup \sigma^{\Gamma} (B) \sqcup \cdots \sqcup \left( \sigma^{\Gamma} \right)^{n - 1} (B) \right)\ \ >\ \ 1 - \epsilon. \] 

Define the set $E = \Pi_{0} (B) \subseteq \widehat{\mathbb{C}}$ and let $\mu$ be the measure as specified in the hypothesis of the Theorem \ref{thm:rkforc}. Then, an application of the measurable projection theorem ensures that $E \in \mathscr{B_{\mu}}$. Note that for any $k \in \mathbb{Z}_{+} \cup \{ 0 \}$ we have $\Pi_{0} \left( \left( \sigma^{\Gamma} \right)^{k} (B) \right) \subseteq \digamma^{k}(E)$ which gives 
\[ \Pi_{0} \left( \bigsqcup\limits_{i\, =\, 0}^{n - 1} \left( \sigma^{\Gamma} \right)^{i} (B) \right)\ \ =\ \ \bigcup\limits_{i\, =\, 0}^{n - 1} \Pi_{0} \left( \left( \sigma^{\Gamma} \right)^{i} (B) \right)\ \ \subseteq\ \ \bigcup\limits_{i\, =\, 0}^{n - 1} \digamma^{i} (E). \] 
Consequently, we obtain
\[ \mu \left( \bigcup\limits_{i\, =\, 0}^{n - 1} \digamma^{i} (E) \right)\ \ \ge\ \ \mu \left( \Pi_{0} \left( \bigcup\limits_{i\, =\, 0}^{n - 1} \left( \sigma^{\Gamma} \right)^{i} (B) \right) \right)\ \ \ge\ \ \mathcal{M} \left( \bigsqcup\limits_{i\, =\, 0}^{n - 1} \left( \sigma^{\Gamma} \right)^{i} (B) \right)\ \ >\ \ 1 - \epsilon, \] 
thereby completing the proof. 
\end{proof}

Observe that we cannot comment about the disjointedness of the sets $B, \digamma (B), \cdots, \digamma^{n - 1} (B)$ using the above technique, since $X \cap Y = \emptyset$ does not imply $\Pi_{0} (X) \cap \Pi_{0} (Y) = \emptyset$, where $X, Y \subseteq \mathscr{O}^{\Gamma}\left( \widehat{\mathbb{C}} \right)$.

\end{document}